\documentclass[11pt,reqno,british,a4paper,twoside]{amsart}

\usepackage[main=british]{babel}
\usepackage{amssymb,mathtools,enumitem,tikz}
\usepackage{csquotes,microtype}

\usepackage[hidelinks]{hyperref}
\usepackage[capitalize]{cleveref}

\crefformat{section}{\S #2#1#3}
\Crefformat{section}{Section~#2#1#3}

\mathtoolsset{mathic=true}
\setlist[enumerate,1]{label=\normalfont(\roman*)}

\AddToHook{bfseries}{\boldmath}

\theoremstyle{plain}
\newtheorem{theorem}{Theorem}[section]
\newtheorem{proposition}[theorem]{Proposition}
\newtheorem{lemma}[theorem]{Lemma}

\theoremstyle{definition}
\newtheorem{remark}[theorem]{Remark}
\newtheorem{example}[theorem]{Example}

\numberwithin{equation}{section}

\NewDocumentCommand{\Lie}{m}{\mathrm{#1}}
\NewDocumentCommand{\lie}{m}{\mathfrak{#1}}

\NewDocumentCommand{\GL}{}{\Lie{GL}}
\NewDocumentCommand{\so}{}{\lie{so}}
\NewDocumentCommand{\SU}{}{\Lie{SU}}
\NewDocumentCommand{\Spin}{}{\Lie{Spin}}
\NewDocumentCommand{\su}{}{\lie{su}}
\NewDocumentCommand{\sP}{}{\lie{sp}}
\NewDocumentCommand{\Sp}{}{\Lie{Sp}}
\NewDocumentCommand{\spin}{}{\lie{spin}}
\NewDocumentCommand{\Un}{}{\Lie{U}}

\NewDocumentCommand{\Hodge}{}{\mathop{}\!{*}}

\NewDocumentCommand{\bC}{}{\mathbb{C}}
\NewDocumentCommand{\bR}{}{\mathbb{R}}
\NewDocumentCommand{\bH}{}{\mathbb{H}}
\NewDocumentCommand{\bZ}{}{\mathbb{Z}}

\NewDocumentCommand{\CP}{}{\bC P}

\DeclareMathOperator{\LIE}{Lie}
\DeclareMathOperator{\rank}{rank}
\DeclareMathOperator{\stab}{stab}
\DeclareMathOperator{\vol}{vol}
\DeclareMathOperator{\Span}{span}

\NewDocumentCommand{\Id}{}{\mathrm{Id}}

\DeclarePairedDelimiter{\abs}{\lvert}{\rvert}
\DeclarePairedDelimiter{\norm}{\lVert}{\rVert}
\DeclarePairedDelimiterX{\inp}[2]{\langle}{\rangle}{#1, #2}
\DeclarePairedDelimiter{\paren}{\lparen}{\rparen}
\DeclarePairedDelimiter{\halfopen}{\lbrack}{\rparen}

\NewDocumentCommand{\with}{}{\mid}
\NewDocumentCommand{\SetSymbol}{o}{\nonscript\:#1|
  \allowbreak\nonscript\:\mathopen{}}
\DeclarePairedDelimiterX{\Set}[1]{\lbrace}{\rbrace}{%
  \renewcommand{\with}{\SetSymbol[\delimsize]}%
#1 }

\NewDocumentCommand{\pd}{mm}{\partial #1 /\partial #2}

\NewDocumentCommand{\any}{}{\,\cdot\,}

\NewDocumentCommand{\eqbreak}{O{2}}{\\&\hspace{#1em}}
\NewDocumentCommand{\eqand}{O{1}}{\hspace{#1em}\text{and}\hspace{#1em}}

\newcommand{\hyphen}{\nobreak-\nobreak\hskip0pt}

\begin{document}

\title{Multi-toric geometries with larger compact symmetry}

\author{Thomas Bruun Madsen}

\address{School of Computing and Engineering\\
University of West London\\
St. Mary’s Road\\
Ealing\\
London W5 5RF\\
United Kingdom}

\email{thomas.madsen@uwl.ac.uk}

\author{Andrew Swann}

\address{Department of Mathematics and DIGIT\\
Aarhus University\\
Ny Munkegade 118, Bldg 1530\\
DK-8000 Aarhus C\\
Denmark}

\email{swann@math.au.dk}

\begin{abstract}
  We study complete, simply-connected manifolds with special holonomy that are
  toric with respect to their multi-moment maps.
  We consider the cases where there is a connected non-Abelian symmetry group
  containing the torus.
  For \( \Spin(7) \)-manifolds, we show that the only possibility are structures
  with a cohomogeneity-two action of \( T^{3} \times \SU(2) \).
  We then specialise the analysis to holonomy \( G_{2} \), to Calabi-Yau
  geometries in real dimension six and to hyperKähler four-manifolds.
  Finally, we consider weakly coherent triples on \( \bR \times \SU(2) \), and their
  extensions over singular orbits, to give local examples in the
  \( \Spin(7) \)-case that have singular orbits where the stabiliser is of rank
  one.
\end{abstract}

\subjclass{Primary 53C25; secondary 32Q25, 53C26, 53C29, 53C55, 53D20}

\maketitle

\section{Introduction}
\label{sec:introduction}

Torus symmetry has proved to be a powerful tool in the study of symplectic and
Kähler manifolds.
A first highlight is the Delzant construction \cite{Delzant:convex} of compact
toric symplectic manifolds from polytopes.
Here the polytope is obtained as the image of the symplectic moment map for the
\( T^{n} \)-action on~\( M^{2n} \).
Using the concept of multi-moment map \cite{Madsen-S:multi-moment} one can
extend some of these ideas to other geometries that are described by closed
differential forms.
In particular, for Ricci-flat geometries of holonomy \( G_{2} \), in dimension
seven, with \( T^{3} \)-symmetry, or of holonomy \( \Spin(7) \), in dimension
eight, with \( T^{4} \)-symmetry, concrete notions of multi-toric geometries
were introduced and studied in \cite{Madsen-S:toric-G2,Madsen-S:toric-Spin7}.
The image polytopes of the symplectic case are now replaced by tri-valent
graphs.
For the \( G_{2} \)-case infinitely many complete toric examples are now known
to exist: these include the Bryant-Salamon example on \( S^{3} \times \bR^{4} \)
\cite{Bryant-Salamon:exceptional}, further cohomogeneity-one examples as
classified by Foscolo, Haskins \& Nordström \cite{Foscolo-HN:G2TNEH} and
examples for toric Calabi-Yau manifolds of Foscolo~\cite{Foscolo:Spin7}.
These examples are all asymptotically conical or asymptotically locally conical.
In contrast, for the case of toric \( \Spin(7) \)-geometries, no complete
examples with full holonomy are currently known.

As the \( G_{2} \)-story suggests one might hope to find toric \( \Spin(7) \)
examples by looking for solutions with a symmetry group of greater dimension
containing the torus~\( T^{4} \).
In this paper, we study this possibility systematically.
The main conclusion is that for larger compact symmetry on simply-connected
manifolds, the geometry cannot be of cohomogeneity one, and the only potential
cases of enhanced symmetry are manifolds with a locally free action of
\( T^{3} \times \SU(2) \) acting with cohomogeneity two.

Our approach is mostly algebraic, considering the possible group actions.
First, we classify which compact groups containing a four-torus can act almost
effectively on \( \Spin(7) \) manifolds.
This gives a fairly rich set of cohomogeneity-one actions and the single
cohomogeneity-two case above.
We then impose requirements for the metrics to have holonomy \( \Spin(7) \) and
for the manifolds to be simply connected.
Discussion of the potential special orbits and implications for the graph
structure of the image of the multi-moment map, then rule out the
cohomogeneity-one cases.
On the other hand, we will show that there are local examples with interesting
structure in the cohomogeneity-two case.

We also discuss how these descriptions can be specialised to Ricci-flat
geometries of smaller dimension, including the \( G_{2} \), Calabi-Yau and
hyperKähler cases, of multi-toric type.
In the process we also describe some of the orbit structure of potential
cohomogeneity-two examples.

We begin the paper by describing how such geometries are related to each other,
and give a direct description of the toric graph structure.
\Cref{sec:spin7} recalls essential features of group actions on manifolds and of
the actions of subgroups of \( \Spin(7) \) on its irreducible eight-dimensional
representation.
In \cref{sec:multi-toric-actions} we show our main results about toric
\( \Spin(7) \)-manifolds, and in \cref{sec:spec-small-geom} we discuss the
specialisations and known results for the smaller Ricci-flat geometries.
Finally, in \cref{sec:local-constr-cohom}, we show how \( \SU(2) \)-invariant
weakly coherent triples on \( \bR \times \SU(2) \) can be used to construct examples
of \( \Spin(7) \)-structures with a cohomogeneity-two action of
\( T^{3} \times \SU(2) \) and certain singular orbits.

\paragraph{\bfseries Acknowledgements} We are grateful for partial support from
the Aarhus University Research Foundation.

\section{A Ricci-flat multi-Hamiltonian hierarchy}
\label{sec:ricci-flat-hierarchy}

Let \( T^{k} \) be a \( k \)-dimensional torus acting (almost) effectively on an
\( n \)-dimensional manifold~\( M \) preserving a collection
\( \alpha = (\alpha_{1},\dots,\alpha_{r}) \) of closed forms
\( \alpha_{i} \in \Omega^{p(i)+1}(M) \).
We call such an action \emph{multi-toric}.
We write \( \bR^{k} \) for the Lie algebra of \( T^{k} \) and
\( \Lambda^{m}\bR^{k} \) for the space of alternating \( m \)-linear forms
on~\( \bR^{k} \).
Specialising~\cite{Madsen-S:multi-moment}, a \emph{multi-moment map} for this
action is a collection of invariant maps
\begin{equation}
  \label{eq:multi}
  \nu = (\nu_{(1)},\dots,\nu_{(r)}),\quad
  \nu_{(i)}\colon M \to \Lambda^{p(i)}\bR^{k},
\end{equation}
such that
\begin{equation*}
  d (\nu_{(i)}(X_{1} \wedge X_{2} \wedge \dots \wedge X_{p(i)})) =
  \alpha_{i}(X_{1},X_{2},\dots,X_{p(i)}, \any),
\end{equation*}
for all vector fields \( X_{j} \) generated by the action of~\( T^{k} \); on the
left-hand side these are identified with the corresponding generating
elements~\( X_{j} \) in the Lie algebra \( \bR^{k} \) of~\( T^{k} \).
When \( \nu \) exists, we call the action \emph{multi-Hamiltonian}.
As \( T^{k} \) is compact and connected, and
\( \alpha_{i}(X_{1},\dots,X_{p(i)},\any) \) is a closed one-form, such a multi-moment
map exists whenever \( M \)~is simply connected, or more generally whenever
\( b_{1}(M) = 0 \).

We are interested in four Ricci-flat geometries of dimensions between four and
eight.
We begin by setting up their notation in a way that gives a uniform hierarchy,
with each geometry extending the previous one.
The rank~\( k \) of the relevant torus~\( T^{k} \) will be determined by the
condition that the dimension of the target space of the multi-moment
map~\( \nu \) agrees with the dimension of the quotient space \( M/T^{k} \), so is
equal to \( (\dim M) - k \).
To describe the multi-moment map, we usually choose bases for the vector
spaces~\( \Lambda^{p(i)}\bR^{k} \) and specify the components of the multi-moment map
with respect to these bases.

\smallbreak

\emph{HyperKähler four-manifolds}.
An almost hyperHermitian structure on a four-manifold is given by three
two-forms \( \omega_{1} \), \( \omega_{2} \), \( \omega_{3} \) which in a local compatible
orthonormal coframe \( e^{4}, e^{5}, e^{6}, e^{7} \) are given by
\begin{equation}
  \label{eq:hK-forms}
  \begin{gathered}
    \omega_{1} = e^{4} \wedge e^{5} + e^{6} \wedge e^{7},\quad
    \omega_{2} = e^{4} \wedge e^{6} + e^{7} \wedge e^{5},\\
    \omega_{3} = e^{4} \wedge e^{7} + e^{5} \wedge e^{6}.
  \end{gathered}
\end{equation}
By Atiyah \& Hitchin \cite{Atiyah-Hitchin:monopoles}, the structure is
\emph{hyperKähler} when all three two-forms \( \omega_{i} \) are closed.
The (restricted) holonomy of the corresponding Riemannian metric
\( g = \sum_{j=4}^{7} (e^{j})^{2} \) is then contained in \( \SU(2) = \Sp(1) \),
and each pair \( (g,\omega_{i}) \)~defines a Kähler structure.
The corresponding torus \( T^{k} \) has dimension one, so is just a circle
\( S^{1} \) generated by a single vector field~\( U \).
The components~\( \nu_{i} \) of the multi-moment map \( \nu \) are ordinary
symplectic moment maps, so satisfy \( d\nu_{i} = \omega_{i}(U,\any) \), for
\( i = 1,2,3 \).

The model example is \( M = \bC^{2} \) with coordinates \( x^{4},\dots,x^{7} \)
giving complex coordinates \( z = x^{4}+ie^{5} \), \( w = e^{6} + ie^{7} \), so
\( \omega_{1} = (i/2)(dz \wedge d\overline{z} + dw \wedge d\overline{w}) \),
\( \omega_{2} + i\omega_{3} = dz \wedge dw \).
For \( S^{1} \) acting by \( (z,w) \mapsto (e^{it}z, e^{-it}w) \), we have
\begin{equation*}
  \nu_{1} = \frac{1}{2}(\abs{w}^{2} - \abs{z}^{2}),\quad
  \nu_{2} + i\nu_{3} = i zw.
\end{equation*}

These geometries have been thoroughly studied via the Gibbons-Hawking Ansatz
introduced in~\cite{Gibbons-H:multi}.
In particular, all complete examples have been classified in
Bielawski~\cite{Bielawski:tri-Hamiltonian} when \( M \) has finite topology, and
in general in Swann~\cite{Swann:twist-mod}.
Interestingly, a consequence of these classifications is that the induced map
\( \nu\colon M^{4}/S^{1} \to \bR^{3} \) of the orbit space is a homeomorphism, when
\( M \) is complete.

\emph{Calabi-Yau geometry in real dimension six}.
An \( \SU(3) \)-structure on~\( M^{6} \) is specified by a two-form \( \omega \) and
a complex three-form \( \Omega_{\bC} = \Omega_{+} + i\Omega_{-} \) given in a local compatible
orthonormal coframe \( e^{2},\dots,e^{7} \) in terms of \eqref{eq:hK-forms} by
\begin{equation}
  \label{eq:CY-forms}
  \omega = e^{2} \wedge e^{3} - \omega_{1},\quad
  \Omega_{\bC} = \Omega_{+} + i\Omega_{-} = (e^{2} + ie^{3}) \wedge (\omega_{2} - i\omega_{3}).
\end{equation}
When \( \omega \) and \( \Omega_{\bC} \) are closed, the (restricted) holonomy
of~\( g \) is contained in~\( \SU(3) \); in particular, \( \omega \) and
\( \Omega_{\bC} \) are parallel for the Levi-Civita connection, \( g \)~becomes
Ricci-flat and we have a \emph{Calabi-Yau structure}.
For multi-toric geometry the torus \( T^{k} \) is of dimension two, \( k = 2 \).
The components of the multi-moment map \( \nu\colon M \to \bR^{4} \) may be chosen
to satisfy
\begin{equation}
  \label{eq:CYmm}
  d\nu_{i} = \omega(U_{i}, \any),\quad i=1,2, \eqand
  \nu_{3}+i\nu_{4} = \Omega_{\bC}(U_{1},U_{2},\any),
\end{equation}
when \( U_{1},U_{2} \) generate the \( T^{2} \)-action.

A model example is \( M^{6} = \bC^{3} \) with \( T^{2} \) acting as the maximal
torus in~\( \SU(3) \).
Using \( z^{1} = x^{2} + ix^{3} \), \( z^{2} = x^{4} - ix^{5} \),
\( z^{3} = z^{6} - iz^{7} \), we have
\( \omega = (i/2)\sum_{j=1}^{3} dz^{i} \wedge d\overline{z}^{i} \) and
\( \Omega_{\bC} = dz^{1} \wedge dz^{2} \wedge dz^{3} \).
The action of \( T^{2} \) is given by
\( (z^{1},z^{2},z^{3}) \mapsto (e^{it_{1}}z^{1}, e^{it_{2}}z^{2},
e^{-i(t_{1}+t_{2})}z^{3}) \), generating vector fields
\( U_{1} = \pd{}{t_{1}} \) and \( U_{2} = \pd{}{t_{2}} \).
For the components \eqref{eq:CYmm}, we have
\begin{equation}
  \label{eq:CYflatmm}
  \nu_{1} = \frac{1}{2}(\abs{z^{3}}^{2} - \abs{z^{1}}^{2}),\quad
  \nu_{2} = \frac{1}{2}(\abs{z^{3}}^{2} - \abs{z^{2}}^{2}),\quad
  \nu_{3} + i\nu_{4} = - z^{1}z^{2}z^{3}.
\end{equation}
These maps were considered by Gross~\cite{Gross:examples-sLag}, including the
observation that \( \nu \) provides a homeomorphism \( \bC^{3}/T^{2} \to \bR^{4} \).

If \( M^{4} \) is a hyperKähler manifold with multi-Hamiltonian
\( S^{1} \)-action, then the coframe choices above induce a Calabi-Yau structure
on \( M^{6} = S^{1} \times \bR \times M^{4} \) that is a Riemannian product with
multi-Hamiltonian \( S^{1} \times S^{1} = T^{2} \)-action.

\emph{Torsion-free \( G_{2} \)-structures in real dimension seven.} A
\( G_{2} \)-structure on \( M^{7} \) is specified by a certain
three-form~\( \varphi \), which in turn determines a metric, an orientation and the
Hodge dual four-form~\( \Hodge_{7} \varphi \), see Bryant~\cite{Bryant:exceptional}.
Using a local compatible orthonormal coframe \( e^{1},\dots,e^{7} \), we may
derive these forms from the expressions~\eqref{eq:CY-forms} as
\begin{equation}
  \label{eq:G2-forms}
  \varphi = e^{1} \wedge \omega - \Omega_{+},\quad \Hodge_{7} \varphi = e^{1} \wedge \Omega_{-} + \frac{1}{2}\omega^{2}.
\end{equation}
When \( \varphi \) and \( \Hodge_{7}\varphi \) are both closed, the associated metric has
(restricted) holonomy contained in~\( G_{2} \) and is Ricci-flat, see
Besse~\cite{Besse:Einstein} and Fernández \& Gray \cite{Fernandez-G:G2}.
The multi-toric examples carry an action of~\( T^{3} \), with multi-moment map
\( \nu\colon M \to \bR^{4} \) given by
\begin{equation}
  \label{eq:G2mm}
  \begin{aligned}
    d\nu_{i} &= \varphi(U_{j},U_{k},\any),\quad\text{for
             \( (i j k) = (1 2 3) \) as permutations,} \\
    d\nu_{4} &= (\Hodge_{7}\varphi)(U_{1},U_{2},U_{3},\any).
  \end{aligned}
\end{equation}

Some examples may be obtained as \( M^{7} = S^{1} \times M^{6} \), with
\( M^{6} \) multi-toric Calabi-Yau, and so the model example is
\( M^{7} = S^{1} \times \bC^{3} \) with the action of
\( T^{3} = S^{1} \times T^{2} \), where \( S^{1} \) rotates the first factor, and
\( T^{2} \) acts on~\( \bC^{3} \) as in the Calabi-Yau model.
Writing the first generating vector field as \( U_{1} \), and the other two as
\( U_{2} \) and \( U_{3} \), we get from \eqref{eq:G2mm} that
\begin{equation*}
  \nu_{1} + i\nu_{4} = \overline{z^{1}z^{2}z^{3}},\quad
  \nu_{2} = \frac{1}{2}(\abs{z^{2}}^{2}-\abs{z^{3}}^{2}),\quad
  \nu_{3} = \frac{1}{2}(\abs{z^{3}}^{2}-\abs{z^{1}}^{2}).
\end{equation*}

\emph{Torsion-free \( \Spin(7) \)-structures in real dimension eight.}
A \( \Spin(7) \)-structure on~\( M^{8} \) is specified by a four-form locally
given by
\begin{equation}
  \label{eq:Spin7-form}
  \begin{split}
    \Phi &= e^{0123} - (e^{01}+e^{23}) \wedge (e^{45}+e^{67})
        - (e^{02}+e^{31}) \wedge (e^{46}+e^{75}) \eqbreak
        - (e^{03}+e^{12}) \wedge (e^{47} + e^{56}) + e^{4567},
  \end{split}
\end{equation}
where \( e^{ijk\ell} = e^{i} \wedge e^{j} \wedge e^{k} \wedge e^{\ell} \).
For each \( x \in M \), the stabiliser of \( \Phi_{x} \) in the general linear group
\( \GL(8,\bR) \) is precisely the group \( \Spin(7) \).
The four-form~\( \Phi \)~determines a Riemannian metric~\( g \) for which the one
forms \( e^{0},\dots,e^{7} \) form an orthonormal coframe, see
Bryant~\cite{Bryant:exceptional}, or Karigiannis~\cite{Karigiannis:deformations}
for a more explicit expression.
When \( \Phi \) is closed, the holonomy of \( g \) reduces to a subgroup of
\( \Spin(7) \) and the metric is Ricci-flat, see Besse~\cite{Besse:Einstein} and
Fernández~\cite{Fernandez:Spin7-manf}.
Multi-toric examples carry an action of~\( T^{4} \) with multi-moment map
\( \nu\colon M \to \bR^{4} \) given by
\begin{equation}
  \label{eq:Spin7mm}
  d\nu_{i} = (-1)^{i+1}\Phi(U_{j},U_{k},U_{\ell},\any),
\end{equation}
where \( (ijk\ell) = (1234) \) as permutations.

In terms of the \( G_{2} \)-forms~\eqref{eq:G2-forms}, we have
\begin{equation*}
  \Phi = e^{0} \wedge \varphi + \Hodge_{7}\varphi.
\end{equation*}
If \( M^{7} \) is multi-Hamiltonian \( G_{2} \) with the forms
\eqref{eq:G2-forms}, then \( M^{8} = S^{1} \times M^{7} \) is multi-Hamiltonian
\( \Spin(7) \).
The model example is then \( T^{2} \times \bC^{3} \), with \( U_{1} \) and
\( U_{2} \) generators for the first two factors and \( U_{3}, U_{4} \)
generating the action of the subgroup \( T^{2} \) of \( \SU(3) \) on
\( \bC^{3} \).  From \eqref{eq:Spin7mm}, we have
\begin{equation*}
  \nu_{1} + i\nu_{2} = \overline{iz^{1}z^{2}z^{3}},\quad
  \nu_{3} = \frac{1}{2}(\abs{z^{3}}^{2} - \abs{z^{2}}^{2}),\quad
  \nu_{4} = \frac{1}{2}(\abs{z^{1}}^{2} - \abs{z^{3}}^{2}).
\end{equation*}

\smallbreak

We note that the multi-moment maps for the \( G_{2} \), Calabi-Yau and
hyperKähler cases can be obtained from the \( \Spin(7) \) case by suitable
reordering and sign changes.
For this reason we shall initially focus on the \( \Spin(7) \) case,
subsequently specialising to get results for the other geometries.

\subsection{The image graph}
\label{sec:image-graph}

In \cite{Madsen-S:toric-G2,Madsen-S:toric-Spin7} it was shown that the above
multi-Hamiltonian geometries have the property that when the torus action is
effective any non-trivial stabilisers are connected subtori of~\( T^{k} \) of
dimension at most two.
Note that an almost effective action of~\( T^{k} \) always induces an effective
action, since \( T^{k}/\Gamma \) for any finite subgroup~\( \Gamma \) is a compact
monogenic group, so again a torus.

For the image graph, let us consider the \( \Spin(7) \)-case; as already
suggested, this can subsequently be specialised to the other geometries in the
hierarchy.
The images in \( \bR^{4} \) of the set of points with a circle stabiliser are
straight lines.
Fixing an oriented primitive basis~\( E_{1},\dots,E_{4} \) of
\( \bR^{4} = \LIE(T^{4}) \), we have a dual basis \( e_{1},\dots,e_{4} \) and a
volume form \( \vol_{T} = e_{1} \wedge e_{2} \wedge e_{3} \wedge e_{4} \) for
\( \LIE(T^{4}) \).
Using this we may identify \( \Lambda^{3}\bR^{4} \) with \( \bR^{4} \) via the
inverse~\( \lambda_{T} \) of the isomorphism \( X \mapsto \vol_{T}(X,\any,\any,\any) \).

If \( X \) generates a circle subgroup of~\( T^{4} \), then each component of
the fixed-point set~\( M^{X} \) is mapped under \( \nu \) and~\( \lambda_{T} \) to a
straight line in \( \bR^{4} \) with tangent vector~\( X \).
At the image of a point with stabiliser \( T^{2} \), three such lines meet, so
they form a \emph{trivalent graph}, and the primitive tangent vectors in
\( \bZ^{4} \subset \bR^{4} \) to these three lines sum to zero.
This is a \emph{zero-tension} condition.

In the above references, a deeper result proved is that \( \nu \) induces a local
homeomorphism \( M/T^{4} \to \bR^{4} \).
However, if one is just interested in the structure of the graph, then the
following direct argument can be used.

For points with one-dimensional stabiliser generated by~\( X \), we choose our
generators of the \( T^{4} \)-action to have \( U_{1} = X \).
It then follows that \( d\nu_{i} = 0 \) along the fixed set~\( M^{X} \) for
\( i=2,3,4 \).
Thus the image of the fixed set is contained in a line
\( (\nu_{2},\nu_{3},\nu_{4}) = (c_{2},c_{3},c_{4}) \) for some
constants~\( c_{i} \).
Furthermore \( U_{2}, U_{3}, U_{4} \) are linearly independent on the fixed set,
giving that \( d\nu_{1} \ne 0 \), so \( \nu_{1} \) is monotone, and thus injective on
each component of~\( M^{X}/T^{4} \).

For a two-dimensional stabiliser, let \( p \in M \) be a point with stabiliser
generated by \( U_{1},U_{2} \).
We extend this to a basis of generators \( U_{1},U_{2},U_{3},U_{4} \) of
\( T^{4} \).
Let \( W \) be a neighbourhood of~\( p \) on which \( U_{3} \) and \( U_{4} \)
are linearly independent.
Now \( \Phi(U_{3},\any,\any,\any) \) induces a closed \( G_{2} \)-form on the
quotient~\( W/U_{3} \) of \( W \) by the circle generated by \( U_{3} \);
similarly, \( \omega = \Phi(U_{3},U_{4},\any,\any) \) is a symplectic form on the
quotient \( Z = W/(U_{3},U_{4}) \) of~\( Z \) by the two-torus generated by
\( U_{3} \) and~\( U_{4} \).
The action of \( U_{1} \) and \( U_{2} \) on the tangent space of the image of
\( p \) in~\( Z \) is that of \( T^{2} \) on \( \bC^{3} \) as a maximal torus in
\( \SU(3) \), cf.~\cref{sec:spin7}.
Hence, by the equivariant Darboux theorem, \( Z \) is equivariantly
symplectomorphic to an open set of the origin in~\( \bC^{3} \) with the
corresponding \( T^{2} \)-action.
Consequently, there are three sets with one-dimensional stabiliser meeting
at~\( p \), given by the complex coordinate axes in~\( \bC^{3} \).
Furthermore, the multi-moment maps \( \nu_{1} \) and \( \nu_{2} \) then correspond
(up to sign and order) to the symplectic moment maps \( \nu_{1} \) and
\( \nu_{2} \) of the flat Calabi-Yau model~\eqref{eq:CYflatmm}.
The primitive tangents to the images of the complex coordinate axes under
\( (\nu_{1},\nu_{2}) \) are then given by \( A_{1} = E_{1}+E_{2} \),
\( A_{2} = -E_{2} \) and \( A_{3} = -E_{1} \), up to overall scale.
These satisfy the zero-tension condition \( A_{1} + A_{2} + A_{3} = 0 \).

\section{Group actions on \texorpdfstring{\( \Spin(7) \)}{Spin(7)}-manifolds}
\label{sec:spin7}

For a compact group~\( G \) acting on a manifold~\( M \), each point
\( p \in M \) has a tubular neighbourhood equivariantly diffeomorphic to
\begin{equation*}
  G \times_{H} V = (G \times V)/H,
\end{equation*}
where \( H = \stab_{G}(p) \) is the stabiliser in \( G \) of~\( p \) and \( V \)
is the fibre of the normal bundle to the orbit \( G \cdot p \cong G/H \)
at~\( p \) as a linear representation of~\( H \).
The action of~\( H \) on \( G \times V \) is given by
\( (g,v)h = (gh,h^{-1}v) \).
In particular, the tangent representation of \( H \) at \( p \) is
\begin{equation*}
  T_{p}M \cong \lie{m} + V,
\end{equation*}
where \( \lie{g} = \lie{h} + \lie{m} \), with \( \lie{m} \) denoting the
isotropy representation.
Note that for a point \( q = [(g,v)] \) in this neighbourhood, the stabiliser is
\begin{equation*}
  \begin{split}
    \stab_{G}(q)
    &= \Set{g' \in G \with (g'g,v) = (gh^{-1},hv)\ \text{for some \( h \in H \)}} \\
    &= \Set{gh'g^{-1} \with h' \in H\;\text{and}\; h'v = v} \cong \stab_{H}(v).
  \end{split}
\end{equation*}
The set of fixed points \( V^{H} \) is a linear subspace of~\( V \) and is a
trivial \( H \)-module.
As \( H \) is compact, we have a splitting \( V = V^{H} + V' \) as
\( H \)-modules, where \( (V')^{H} = \Set{0} \).
We now see that \( \stab_{G}(q) \) is isomorphic to \( H \) via conjugation
in~\( G \) only if \( v \in V^{H} \), and is conjugate to a proper subgroup
of~\( H \) for \( v \in V\setminus V^{H} \).
It follows that there is an open dense set of points whose stabilisers are
conjugate; the corresponding points of the neighbourhood lie in \emph{principal
orbits} for the \( G \) action.
If \( p \) is a point in a principal orbit of~\( G \), then near \( p \) each
orbit is isomorphic to~\( G/H \), with \( H = \stab_{G}(p) \), so the normal
representation~\( V \) is trivial.

Now consider the case when \( M \) admits a \( \Spin(7) \)-structure that is
preserved by~\( G \).
Then for any point \( p \), we have that \( H = \stab_{G}(p) \) acts on the
tangent space via a restriction of the irreducible representation of
\( \Spin(7) \) on~\( \bR^{8} \).
In the following, it will be useful to determine which subgroups~\( H \) of
\( \Spin(7) \) admit a trivial summand~\( \bR^{k} \) in this representation.

From the list of transitive actions on spheres in Besse~\cite{Besse:Einstein},
we get the following information.
Firstly, \( \Spin(7) \) acts transitively on the sphere
\( S^{7} \subset \bR^{8} \) with stabiliser~\( G_{2} \).
Furthermore, \( G_{2} \) acts transitively on \( S^{6} \subset \bR^{7} \) with
stabiliser \( \SU(3) \), and \( \SU(3) \) acts transitively on
\( S^{5} \subset \bR^{6} = \bC^{3} \) with stabiliser~\( \SU(2) \).
The representation of \( \SU(2) \) on \( \bR^{5} \) is as \( \bR + \bC^{2} \),
and the action on \( S^{3} \subset \bC^{3} \) is transitive with trivial stabiliser.
We conclude that if the stabiliser~\( H \) has tangent representation:
\begin{enumerate}
\item \( T_{p}M = \bR + V \), then \( H \) is a subgroup of \( G_{2} \) such
  that \( V \) is the restriction of the standard representation of \( G_{2} \)
  on \( \bR^{7} \) to~\( H \);
\item \( T_{p}M = \bR^{2} + V \), then \( H \) is a subgroup of \( \SU(3) \),
  and \( V \) is a restriction of the standard representation of \( \SU(3) \) on
  \( \bC^{3} \);
\item \( T_{p}M = \bR^{3} + V \), then \( H \) is a subgroup of \( \SU(2) \),
  \( T_{p}M = \bR^{4} + V' \) and \( V' \) is a restriction of the standard
  representation of \( \SU(2) \) on \( \bC^{2} \);
\item \( T_{p}M = \bR^{5} + V \), then \( H \) and \( V \) are trivial,
  \( T_{p}M = \bR^{8} \).
\end{enumerate}

\section{Multi-toric actions on complete
\texorpdfstring{\( \Spin(7) \)}{Spin(7)}-manifolds}
\label{sec:multi-toric-actions}

Consider a non-compact multi-toric \( \Spin(7) \)-manifold \( M^{8} \) with a
compact symmetry group~\( G \) that is strictly larger than~\( T^{4} \).
We assume \( G \) acts almost effectively, is connected and is not Abelian.
We are most interested in the situation when \( M \) is complete with full
holonomy \( \Spin(7) \).
In the presence, of continuous symmetry this forces \( M \) to be non-compact,
justifying our previous assumption.

Principal orbits are of the form \( G/H \) and of codimension at least one, by
non-compactness of \( M \).
From the discussion in \cref{sec:spin7}, we have that \( H \) is isomorphic to a
subgroup of \( G_{2} \) and thus \( \dim H \leqslant 14 \) and
\( \dim(G/H) \leqslant 7 \).  This gives
\begin{equation*}
  \dim G \leqslant 7 + \dim H \leqslant 21.
\end{equation*}
For the action to be multi-toric we also have \( \rank G \geqslant 4 \).

We may write \( \lie{g} = \lie{a} + \bigoplus_{i=1}^{r}\lie{k}_{i} \) with
\( \lie{a} = \bR^{r} \) Abelian of rank~\( r \) and \( \lie{k}_{i} \) simple of
compact type.
As \( \dim G \leqslant 21 \) and \( \rank G \geqslant 4 \), we have
\( \dim\lie{k}_{i} \leqslant 21 \), and \( \dim\lie{k}_{i} < 21 \) if
\( \rank\lie{k}_{i} < 4 \), so \( \lie{k}_{i} \) is one of
\begin{equation}
  \label{eq:g-1}
  \su(4),\quad \lie{g}_{2},\quad \sP(2),\quad \su(3),\quad \su(2),
\end{equation}
with corresponding dimensions
\begin{equation*}
  15,\quad 14,\quad 10,\quad 8,\quad 3.
\end{equation*}
In particular, note that the algebras \( \so(7) \) and \( \sP(3) \) have
dimension \( 21 \) but only rank three, and therefore do not appear.

The possible Lie algebras \( \lie{h} \) of the subgroup \( H \) of \( G_{2} \)
are
\begin{equation*}
  \lie{g}_{2},\quad \su(3),\quad 2\su(2),\quad \bR + \su(2),\quad
  \su(2),\quad \bR^{2},\quad \bR,\quad 0.
\end{equation*}
with dimensions
\begin{equation*}
  14,\quad 8,\quad 6,\quad 4,\quad 3,\quad 2,\quad 1,\quad 0.
\end{equation*}

Given the list~\eqref{eq:g-1} of possible simple factors of~\( \lie{g} \), the
only factor that contains \( \lie{g}_{2} \) is \( \lie{g}_{2} \) itself.
As \( \dim \lie{g}_{2} = 14 > 21/2 \) there can be only one such factor
in~\( \lie{g} \), so if \( \lie{h} = \lie{g}_{2} \), then \( \lie{h} \) is an
ideal of \( \lie{g} \) and the action of this subalgebra is not effective.
Thus \( \lie{h} \) is a strict subgroup of \( \lie{g}_{2} \) and
\( \dim H \leqslant 8 \).

This gives
\begin{equation}
  \label{eq:dimg-2}
  \dim G \leqslant 7 + \dim H \leqslant 15.
\end{equation}
Note that whilst \( \dim\su(4) = 15 \), it has rank three, strictly less than
four, so we can remove it from the list~\eqref{eq:g-1} of candidates.

If \( \lie{g} \) contains \( \lie{g}_{2} \), then it follows
from~\eqref{eq:dimg-2} that \( \lie{g} \) is either \( \lie{g}_{2} \) or
\( \bR + \lie{g}_{2} \).
But these algebras have ranks two and three, so \( \lie{g}_{2} \) can also be
excluded from~\eqref{eq:g-1}.

Suppose \( \lie{g} \) contains \( \sP(2) \).
Then \eqref{eq:dimg-2} implies the other summands are either \( \su(2) \) or
Abelian.
None of these algebras contain \( \su(3) \), so in this case
\( \dim H \leqslant 6 \) and
\begin{equation}
  \label{eq:dimg-3}
  \dim G \leqslant 7 + \dim H \leqslant 13.
\end{equation}
Combining this with the constraint that \( \rank G \geqslant 4 \), we get
\( \lie{g} = \bR^{r} + \sP(2) \) with \( r = 2 \) or \( 3 \) and
\( \lie{h} = 2\su(2) \).
There is only one subalgebra \( 2\su(2) = \su(2)_{+} + \su(2)_{-} \) of
\( \sP(2) \), and we have
\begin{equation*}
  \sP(2)_{\bC} = S^{2}(\bC^{4}) = S^{2}(\bC^{2}_{+} + \bC^{2}_{-}) =
  S^{2}\bC^{2}_{+} + \bC^{2}_{+}\bC^{2}_{-} + S^{2}\bC^{2}_{-},
\end{equation*}
where \( \bC^{2}_{\pm} \) is the standard representation of \( \su(2)_{\pm} \)
on~\( \bC^{2} \).
Thus, as an \( \lie{h} \)-module, the tangent space \( T_{p}M \) for \( p \) in
a principal orbit is \( \bR^{4} + [S^{1}_{+}S^{1}_{-}] \).
As there is a trivial module of dimension four, the discussion of
\cref{sec:spin7} implies that \( \lie{h} = 2\su(2) \) is a subgroup
of~\( \su(2) \), which is clearly a contradiction.
Consequently, \( \lie{g} \) cannot contain \( \sP(2) \).

If \( \lie{g} \) contains \( \su(3) \), then as above it follows that it
contains only one such summand, since \( \dim\lie{g}\leqslant15 \).
For almost effective actions this implies that the isotropy algebra cannot
be~\( \su(3) \), so \( \dim H \leqslant 6 \), which again implies the
constraint~\eqref{eq:dimg-3}.
If \( \su(2) \) is also a summand then \( \lie{g} = \bR^{r} + \su(2) + \su(3) \)
with \( r = 1 \) or \( 2 \).
If \( \lie{h} = 2\su(2) \) then one summand of \( \lie{h} \) is an ideal of
\( \lie{g} \), and the action is not almost effective.
Thus \( \dim H \leqslant 4 \) and \( \dim G \leqslant 11 \); in particular, the above two
possibilities for \( \lie{g} \) do not occur.
It follows that \( \lie{g} = \bR^{r} + \su(3) \) with \( r = 2 \) or \( 3 \).
In either case, the isotropy group is at least of dimension three and so must
contain~\( \su(2) \).
If \( \lie{h} = \bR + \su(2) \), we get \( T_{p}M = \bR^{4} + \bC^{2} \), which
implies \( \lie{h} \) is a subgroup of \( \su(2) \); this is not possible.
Hence,
\begin{equation}
  \label{eq:pr-1}
  \lie{g} = \bR^{2} + \su(3),\quad \lie{h} = \su(2),
\end{equation}
with \( \su(3) = \su(2) + \bR + \bC^{2} \).
This is a highest root \( \su(2) \) subalgebra of~\( \su(3) \).
The standard three-dimensional representation of \( \su(3) \) splits as
\( \bC^{3} = \bC + \bC^{2} \) under the action of~\( \su(2) \).
This is an action of cohomogeneity one.

The remaining cases to consider are
\begin{equation*}
  \lie{g} = \bR^{r} + m\su(2)
\end{equation*}
where \( r + m \geqslant 4 \), \( r + 3m \leqslant 11 \) and \( m > 0 \).
Note that this gives \( m \leqslant 3 \).

For \( m = 3 \), we have \( r = 1 \) or \( 2 \) and hence
\( \dim H \geqslant 3 \).
Thus \( \lie{h} \) contains \( \su(2) \).
This is necessarily of subalgebra of \( 3\su(2) \leqslant \lie{g} \).
Furthermore \( \lie{h}\ne2\su(2) \), because \( \lie{h} \) contains no ideal
of~\( \lie{g} \).
It follows that the tangent representation \( T_{p}M \) is a sum of a trivial
representation~\( \bR^{2} \) with a copy of the tangent representation
of~\( 3\su(2)/\su(2) \).

If \( \su(2) \) is not diagonally embedded in \( 3\su(2) \), then
\( 3\su(2)/\su(2) \cong \bR^{3} + \su(2) \) and \( T_{p}M = \bR^{3} + V \) as an
\( \lie{h} \)-module, for \( \lie{h} = \su(2) \) or \( \bR+\su(2) \).
But the previous section now implies \( \lie{h} = \su(2) \) and
\( V = \bR + \bC^{2} \).
This does not contain the representation \( \su(2) \), so is not possible.

It follows that \( \su(2) \) is diagonally embedded in \( 3\su(2) \).
Hence, there is no Abelian subalgebra of \( 3\su(2) \) that commutes with
\( \su(2) \), so \( \lie{h} = \su(2) \) and we get the pair
\begin{equation}
  \label{eq:pr-2}
  \lie{g} = \bR + 3\su(2),\quad \lie{h} = \su(2)
\end{equation}
This is an action of cohomogeneity one.

If \( m = 2 \), the Lie algebra \( \lie{h} \) can not contain \( \su(2) \) as
otherwise the tangent representation would contain a single copy of the module
\( \su(2) \), which is not possible.
Thus \( \lie{h} \) is Abelian.
The tangent space now contains a trivial module of dimension at least four, so
\( \rank H = 1 \). Considering the rank of \( G \), we simply have
\begin{equation}
  \label{eq:pr-3}
  \lie{g} = \bR^{2} + 2\su(2),\quad \lie{h} = \bR,
\end{equation}
where the projection of \( \lie{h} \) to \( 2\su(2) \) has opposite weights in
the two summands.  This is an action of cohomogeneity one.

For \( m = 1 \), \( \lie{h} \) is a subalgebra of \( \bR \).
But it must be trivial, as for \( \lie{h}=\bR \) the only non-trivial submodule
of \( \lie{g}/\lie{h} \) is of dimension two.  Thus we get two cases,
\begin{gather}
  \label{eq:pr-45a}
  \lie{g} = \bR^{4} + \su(2),\quad \lie{h} = \Set{0}, \\
  \label{eq:pr-45b}
  \lie{g} = \bR^{3} + \su(2),\quad \lie{h} = \Set{0},
\end{gather}
with actions of cohomogeneity one and two, respectively.

\begin{proposition}
  \label{prop:Spin7-sym}
  Let \( G \) be a compact Lie group of rank at least four acting almost
  effectively and preserving a \( \Spin(7) \)-structure on a non-compact
  manifold~\( M \). Then either \( G \) is Abelian, or the principal orbit
  \( G/H \) is given by on of the pairs \( (\lie{g},\lie{h}) \) in equations
  \eqref{eq:pr-1}--\eqref{eq:pr-45b}.  \qed
\end{proposition}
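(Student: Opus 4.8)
The plan is to argue entirely algebraically, combining crude dimension and rank bounds with the tangent-representation analysis of \cref{sec:spin7} to whittle down the possible pairs \( (\lie{g},\lie{h}) \). First I would record the basic constraints: non-compactness of \( M \) forces principal orbits \( G/H \) to have positive codimension, so \( \dim(G/H)\leqslant 7 \); the classification of tangent representations shows \( H \) embeds in \( G_{2} \), giving \( \dim H\leqslant 14 \) and hence \( \dim G\leqslant 21 \), while multi-toricity demands \( \rank G\geqslant 4 \). Writing \( \lie{g}=\lie{a}+\bigoplus_{i}\lie{k}_{i} \) with \( \lie{a} \) Abelian and the \( \lie{k}_{i} \) simple of compact type, these bounds confine each simple factor to the short list \( \su(4),\lie{g}_{2},\sP(2),\su(3),\su(2) \), and confine \( \lie{h} \) to the subalgebras of \( \lie{g}_{2} \). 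If no simple factor occurs then \( \lie{g} \) is Abelian and we are in the first alternative, so I would assume at least one \( \lie{k}_{i} \) is present.

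The heart of the argument is a case analysis on the simple factors, and the recurring tool is the observation from \cref{sec:spin7}: if \( T_{p}M \) carries a trivial \( \lie{h} \)-summand \( \bR^{k} \), then according to \( k \) the group \( H \) must sit inside \( G_{2} \), \( \SU(3) \) or \( \SU(2) \) (or be trivial), with the complementary module a restriction of the corresponding standard representation. I would first bound the isotropy away from the top: since \( \lie{h} \) embeds in \( \lie{g}_{2} \) but cannot equal \( \lie{g}_{2} \) (else it would be the unique, hence ideal, \( \lie{g}_{2} \)-factor of \( \lie{g} \)), we get \( \dim H\leqslant 8 \) and thus \( \dim G\leqslant 15 \); this removes \( \su(4) \) (rank three), and a \( \lie{g}_{2} \)-factor is then excluded as well, since \( \lie{g}_{2} \) or \( \bR+\lie{g}_{2} \) would have rank below four. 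The \( \sP(2) \)-case I would rule out by computing the branching of \( \sP(2)_{\bC}=S^{2}(\bC^{4}) \) under its unique \( 2\su(2) \) subalgebra: the resulting tangent module carries a four-dimensional trivial summand, which would force \( 2\su(2)\leqslant\su(2) \), a contradiction.

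The surviving possibilities organise into the \( \su(3) \)-case and the families \( \lie{g}=\bR^{r}+m\su(2) \). For \( \su(3) \) I would show the isotropy must be the highest-root \( \su(2) \), yielding \eqref{eq:pr-1}, by ruling out \( \bR+\su(2) \) via the \( \bR^{4}+\bC^{2} \) pattern. For \( m\su(2) \) the constraints give \( m\leqslant 3 \), and I would treat \( m=3,2,1 \) separately: for \( m=3 \) the key point is that a non-diagonally embedded \( \su(2) \) would produce a tangent module \( \bR^{3}+V \) with \( V=\bR+\bC^{2} \) omitting the required \( \su(2) \), forcing the diagonal embedding and hence \eqref{eq:pr-2}; for \( m=2 \) the single-copy obstruction makes \( \lie{h} \) Abelian of rank one, giving \eqref{eq:pr-3}; and for \( m=1 \) the isotropy must be trivial, giving \eqref{eq:pr-45a} and \eqref{eq:pr-45b}.

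I expect the delicate steps to be the representation-theoretic eliminations rather than the dimension counting. In particular, the \( \sP(2) \) branching and, especially, the \( 3\su(2) \) analysis---where one must carefully distinguish diagonal from non-diagonal embeddings of \( \su(2) \) and track precisely which submodules a putative isotropy can carry---are where the argument is most easily derailed; keeping rigorous control of the trivial-summand dimension against the four allowed patterns of \cref{sec:spin7} will be the main obstacle.
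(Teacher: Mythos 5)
Your proposal is correct and follows essentially the same route as the paper: the same dimension/rank bounds reducing to the list \( \su(4),\lie{g}_{2},\sP(2),\su(3),\su(2) \), the same ideal argument excluding \( \lie{h}=\lie{g}_{2} \) (hence \( \dim G\leqslant 15 \), killing \( \su(4) \) and a \( \lie{g}_{2} \)-factor), the same branching of \( S^{2}(\bC^{4}) \) under \( 2\su(2) \) to eliminate \( \sP(2) \), and the same case analysis for \( \bR^{r}+m\su(2) \) with the diagonal-versus-non-diagonal distinction at \( m=3 \). The steps you flag as delicate are exactly the ones the paper spells out, and your treatment of them matches the paper's.
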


We note that for the actions of cohomogeneity one, this list agrees with the
results of Reidegeld~\cite{Reidegeld:G2}, when restricted to non-Abelian groups
of rank at least four.

Now suppose that the restricted Riemannian holonomy is all of \( \Spin(7) \) and
the induced Riemannian metric on \( M \) is complete.
In the cohomogeneity-one case, we know from Mostert~\cite{Mostert:compact} that
the orbit space is homeomorphic to a circle or an interval.
As the metric is Ricci-flat, the Cheeger-Gromoll Theorem
\cite{Cheeger-G:nn-Ricci} implies that the orbit space is homeomorphic to
\( [0,\infty) \).
In particular, there is exactly one singular orbit~\( G/K \).
The quotient \( K/H \) is a sphere~\( S^{r} \), \( r \geqslant 1 \).
Note also that \( G/K \) is a deformation retract of~\( M \).

Let us suppose that \( M \) is simply connected.
Then the \( T^{4} \)-action is also multi-Hamiltonian and \( G/K \) is also
simply connected.
From the fibration \( S^{r} = K/H \to G/H \to G/K \), we have the exact sequences
\begin{equation}
  \label{eq:p1Sp1M}
  \pi_{1}(S^{r}) \to \pi_{1}(G/H) \to \pi_{1}(G/K) = \pi_{1}(M) = 1.
\end{equation}
Thus \( \pi_{1}(S^{r}) \) surjects to \( \pi_{1}(G/H) \), so \( \pi_{1}(G/H) \) is
either trivial, \( r > 1 \), or cyclic, \( r = 1 \).
By passing to a finite cover, we may assume that \( G = T^{k} \times G_{s} \) with
\( G_{s} \) a product of compact, simple, simply-connected groups.
As \( G/K \) is simply-connected and \( G \) is connected, we get that \( K \)
is connected.
For the cohomogeneity-one combinations above, we have for the centres \( Z(H) \)
and \( Z(G) \) that
\( \rank\lie{z}(\lie{h}) < \rank\lie{z}(\lie{g}) = \rank{T^{k}} = k \), with
\( k \geqslant 1 \), so \( \pi_{1}(G/H) \) is infinite, and \( r = 1 \).
This implies that \( \rank\lie{z}(\lie{k}) = 1 + \rank\lie{z}(\lie{h}) \).
For cases \eqref{eq:pr-1}, \eqref{eq:pr-45a} and \eqref{eq:pr-45b} this is not
possible with \( \pi_{1}(G/K) = 1 \).

For case \eqref{eq:pr-2}, we get that \( \lie{k} = \bR+\su(2) \).
As \( \lie{h} = \su(2) \) is the diagonal subalgebra of \( 3\su(2) \), there is
no one-dimensional subgroup of \( 3\su(2) \) that commutes with
\( \lie{h} = \su(2) \), so the projection of \( \bR \) in \( \lie{k} \) to the
summand \( 3\su(2) \) is trivial.
The tangent representation at \( p \) in the special orbit, is now
\( L + 2\su(2) \) with \( L \) an almost effective two-dimensional
representation of~\( \bR \), induced by a circle.
But then the action of~\( \bR \) fixes a six-dimensional subspace
of~\( T_{p}M \) and by \cref{sec:spin7}, this implies that the action is
trivial.  Hence, \eqref{eq:pr-2} does not occur.

This leaves the case~\eqref{eq:pr-3}, with \( \lie{k} = \bR^{2} \).
As \( G/K \) is simply-connected the projection of \( \lie{k} \) to the
\( \bR^{2} \)-summand is an isomorphism and we have \( K_{0} \cong T^{2} \) for the
component~\( K_{0} \) of the identity of~\( K \).
The map \( \pi_{1}(K) \to \pi_{1}(G) \) is a surjection
\( \bZ^{2} \to \bZ^{2} \), and so an isomorphism.
We get that \( T^{2} \hookrightarrow T^{2} \times \SU(2)^{2} \to K \) is an isomorphism.
In addition, \( \lie{k} \) contains~\( \lie{h} \), hence has at least a
one-dimensional projection to~\( 2\su(2) \).
The special orbit \( G/K \) is six-dimensional, so the normal representation
\( V \) is a non-trivial two-dimensional representation~\( L \) of~\( \lie{k} \)
with kernel~\( \lie{h} \).
On the other hand, the tangent representation of~\( \lie{k} \) contains a
trivial submodule of dimension at least one, implying that the tangent
representation is that of the maximal torus in \( G_{2} \) on
\( \bR + \bR^{7} \); this is conjugate to the action of the maximal torus in
\( \SU(3) \) on \( \bR^{2} + \bC^{3} \).
As a consequence, the projection of \( \lie{k} \) to \( 2\su(2) \) is
two-dimensional and the embedding of \( K \) is equivalent to
\begin{equation*}
  (e^{is},e^{it}) \mapsto (e^{is},e^{it},e^{i(as+bt)},e^{i(cs+dt)}),
\end{equation*}
with \( (a,b),(c,d) \in \bZ^{2} \) linearly independent over~\( \bR \).

We now have that \( G/K = S^{3} \times S^{3} \) via
\begin{equation*}
  (e^{is},e^{it},A,B) \mapsto (Ae^{-i(as+bt)},Be^{-i(cs+dt)}).
\end{equation*}
Regarding \( S^{3} \subset \bH = \bC + \bC j \) as the unit sphere, this space carries
a effective action of \( T^{4} \), induced by the action of \( G \), given by
\( (p,q) = (z+wj, u+vj) \mapsto (e^{it_{1}}z+e^{it_{2}}wj, e^{it_{3}}u+e^{it_{4}}vj)
\).
The action of any \( T^{4} \)-subgroup of \( G \) is conjugate to this action.
The point \( z+wj \) has non-trivial stabiliser if and only \( z = 0 \) or
\( w = 0 \) with the Lie algebra of the stabiliser generated by \( (0,1) \) or
\( (1,0) \), respectively, in \( \lie{t}^{2} = \bR^{2} \).
Thus the action of \( T^{4} \) on \( G/K \) has four sets where the stabiliser
is isomorphic to \( T^{2} \).  The Lie algebras of the four stabilisers are
\begin{equation*}
  \bR E_{1} + \bR E_{3},\quad \bR E_{2} + \bR E_{3},\quad
  \bR E_{2} + \bR E_{4},\quad \bR E_{1} + \bR E_{4},
\end{equation*}
with \( E_{i} \) denoting the standard basis vectors.
Note that together these stabilisers span \( \bR^{4} = \lie{t}^{4} \).
Taking this list cyclically the intersection of any two adjacent stabilisers is
the Lie algebra of the circle stabiliser of some points of
\( S^{3} \times S^{3} \).
In the image of the multi-moment map, each two-dimensional stabiliser defines a
point and these are joined by edges whose directions are parallel to the
one-dimensional stabilisers.
But the edges of a quadrilateral are necessarily linearly dependent, whereas the
one-dimensional stabilisers are generated by four linearly independent vectors
\( E_{1},\dots,E_{4} \).
This is a contradiction, so there is no multi-toric \( \Spin(7) \)-geometry
on~\( M \).
As \( \pi_{1}(M) = 1 \), any \( T^{4} \)-action preserving the structure is
multi-toric, so we conclude that there is no \( G \)-invariant structure
on~\( M \) of holonomy~\( \Spin(7) \).

Thus we have no examples of cohomogeneity one.
And are just left with the cohomogeneity-two case~\eqref{eq:pr-45b}.

\begin{theorem}
  Suppose \( M \) is a complete, simply-connected, eight-dimensional manifold
  with holonomy equal to \( \Spin(7) \), preserved by an effective action
  of~\( T^{4} \).
  If the geometry admits a non-Abelian, compact, connected, almost effective
  symmetry group \( G \) containing the \( T^{4} \)-action, then \( G \) is
  finitely covered by \( T^{3} \times \SU(2) \) acting with cohomogeneity two.  \qed
\end{theorem}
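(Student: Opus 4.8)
The plan is to combine the classification of \cref{prop:Spin7-sym} with the completeness and topological hypotheses, eliminating every candidate principal orbit except one. Since full holonomy together with a continuous symmetry forces \( M \) to be non-compact, \cref{prop:Spin7-sym} applies and the principal orbit \( G/H \) must be one of the pairs \eqref{eq:pr-1}--\eqref{eq:pr-45b}. All but \eqref{eq:pr-45b} are of cohomogeneity one, so the strategy is to rule these out, leaving \eqref{eq:pr-45b} as the unique survivor; its Lie algebra \( \bR^{3} + \su(2) \) then forces \( G \) to be finitely covered by \( T^{3} \times \SU(2) \) acting with cohomogeneity two.

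First I would fix the shape of the orbit space in the cohomogeneity-one cases. By Mostert~\cite{Mostert:compact} it is a circle or a closed interval, and since the Ricci-flat metric is complete and of non-negative Ricci curvature, the Cheeger-Gromoll splitting theorem~\cite{Cheeger-G:nn-Ricci} pins it to \( [0,\infty) \). Hence there is exactly one singular orbit \( G/K \), which is a deformation retract of \( M \) and therefore simply connected. The homotopy exact sequence of the sphere bundle \( S^{r} = K/H \to G/H \to G/K \) then governs \( \pi_{1}(G/H) \), and comparing the ranks of the centres \( \lie{z}(\lie{h}) \) and \( \lie{z}(\lie{g}) \) shows \( \pi_{1}(G/H) \) is infinite; this forces \( r = 1 \) and the numerical constraint \( \rank\lie{z}(\lie{k}) = 1 + \rank\lie{z}(\lie{h}) \).

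This constraint excludes \eqref{eq:pr-1} and \eqref{eq:pr-45a} outright. For \eqref{eq:pr-2} I would argue representation-theoretically: because \( \lie{h} = \su(2) \) sits diagonally in \( 3\su(2) \), the \( \bR \)-factor of \( \lie{k} \) projects trivially to \( 3\su(2) \), so its circle fixes a six-dimensional subspace of \( T_{p}M \), which by the list in \cref{sec:spin7} makes the action trivial — a contradiction. The delicate case is \eqref{eq:pr-3}, where \( \lie{k} = \bR^{2} \): after analysing the normal and tangent representations one identifies \( G/K \cong S^{3} \times S^{3} \) carrying an explicit effective \( T^{4} \)-action. Its four two-dimensional stabilisers together span \( \lie{t}^{4} = \bR^{4} \), and cyclically adjacent pairs intersect in one-dimensional stabilisers that, via \cref{sec:image-graph}, give the edge directions of a quadrilateral in the multi-moment image. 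These four directions are the independent generators \( E_{1},\dots,E_{4} \), yet the edges of a quadrilateral are linearly dependent — the contradiction that eliminates \eqref{eq:pr-3}.

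The main obstacle is this final case: promoting the purely algebraic data of \eqref{eq:pr-3} to the geometric identification \( G/K \cong S^{3} \times S^{3} \) and then extracting the quadrilateral of stabilisers, since it is here that the image-graph machinery of \cref{sec:image-graph} must be deployed in full. With \eqref{eq:pr-1}, \eqref{eq:pr-2}, \eqref{eq:pr-3} and \eqref{eq:pr-45a} all excluded, only the cohomogeneity-two pair \eqref{eq:pr-45b} remains, and the theorem follows.
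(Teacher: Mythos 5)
Your proposal is correct and takes essentially the same approach as the paper's own proof: non-compactness plus \cref{prop:Spin7-sym}, Mostert and Cheeger--Gromoll to reduce to a single singular orbit, the fundamental-group and centre-rank argument via the fibration \( K/H \to G/H \to G/K \) to eliminate \eqref{eq:pr-1} and \eqref{eq:pr-45a}, the fixed-subspace representation argument for \eqref{eq:pr-2}, and the \( S^{3} \times S^{3} \) quadrilateral contradiction against the zero-tension trivalent graph structure for \eqref{eq:pr-3}. The steps you flag as delicate are exactly the ones the paper carries out in detail, so there is nothing substantively different to compare.
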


One way to obtain examples with reduced holonomy arises by taking products of a
circle with the \( G_{2} \)-solutions from Aslan \& Trinca
\cite[Theorem~4.8]{Aslan-T:G2-cohom2}.
This produces local solutions of cohomogeneity two under
\( T^{3} \times \SU(2) \) without singular orbits and with holonomy contained
in~\( G_{2} \leqslant \Spin(7) \).

\subsection{Cohomogeneity two}
\label{sec:cohomogeneity-two}

We now investigate the possible orbit structures of potential cohomogeneity-two
examples.
Here, we have \( G = T^{3} \times \SU(2) \) with \( H \) discrete.
We first determine the Lie algebras~\( \lie{k} \) for the possible singular
stabilisers~\( K \).

\begin{lemma}
  Suppose \( G = T^{3} \times \SU(2) \) acts almost effectively preserving a
  \( \Spin(7) \)-structure on \( M^{8} \).
  Then any non-trivial Lie algebra of a stabiliser~\( K \) is one of the
  following subalgebras of \( \lie{g} = \bR^{3} + \su(2) \):
  \begin{enumerate}
  \item \label{item:k1} \( \lie{k} \cong \bR \) with non-trivial projection to
    \( \su(2) \),
  \item \label{item:k2} \( \lie{k} \cong \bR^{2} \) with one-dimensional projection
    to \( \su(2) \),
  \item \label{item:k3} \( \lie{k} = \su(2) \), or
  \item \label{item:k4} \( \lie{k} = \bR + \su(2) \) with
    \( \bR \leqslant \bR^{3} \leqslant \lie{g} \).
  \end{enumerate}
\end{lemma}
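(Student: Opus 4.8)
The plan is to reduce the lemma to an algebraic classification: enumerate every subalgebra \( \lie{k} \) of \( \lie{g} = \bR^{3} + \su(2) \) and then discard those that cannot arise as an isotropy algebra. First I would organise the subalgebras by the projection \( \pi\colon \lie{g} \to \su(2) \). As \( \su(2) \) is simple and \( \bR^{3} \) is central, \( \pi(\lie{k}) \) is either \( 0 \), a line, or all of \( \su(2) \); and when \( \pi(\lie{k}) = \su(2) \) one has \( [\lie{k},\lie{k}] = \su(2) \), so \( \su(2) \leqslant \lie{k} \) and \( \lie{k} = W + \su(2) \) with \( W = \lie{k} \cap \bR^{3} \). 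This produces a finite list: the abelian algebras \( \lie{k} \leqslant \bR^{3} \) (dimensions \( 0 \)--\( 3 \)); the abelian algebras with one-dimensional projection to \( \su(2) \) (dimensions \( 1 \)--\( 4 \)); and the algebras \( W + \su(2) \) (dimensions \( 3 \)--\( 6 \)). The four algebras \ref{item:k1}--\ref{item:k4} all appear, and the task is to eliminate the remaining candidates.

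Two constraints from \cref{sec:spin7} do the eliminating. First, the isotropy representation of \( \lie{k} \) on \( T_{p}M \cong \bR^{8} \) is faithful (a subalgebra acting trivially on \( T_{p}M \) would, by linearisation and connectedness, act trivially on \( M \)), so \( \lie{k} \) embeds in \( \spin(7) \) and \( \rank\lie{k} \leqslant 3 \); this removes at once the two rank-four candidates \( \bR^{4} \) and \( \bR^{3} + \su(2) = \lie{g} \). Second, \( T_{p}M \) contains the isotropy summand \( \lie{g}/\lie{k} \) as a \( \lie{k} \)-submodule, and since \( \bR^{3} \) is central the image of \( \bR^{3} \) there is a trivial summand of dimension \( 3 - \dim(\bR^{3} \cap \lie{k}) \) --- with the whole of \( \su(2) \) fixed in addition when \( \pi(\lie{k}) = 0 \). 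I would then feed this positive lower bound into the dichotomy of \cref{sec:spin7}: a trivial summand of dimension \( \geqslant 1 \), \( \geqslant 2 \), or \( \geqslant 3 \) forces \( \lie{k} \) into \( \lie{g}_{2} \), \( \su(3) \), or \( \su(2) \) respectively (the last automatically upgrading the summand to dimension four), while dimension \( \geqslant 5 \) forces \( \lie{k} = 0 \).

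Running through the survivors: the abelian algebras inside \( \bR^{3} \) die because fixing all of \( \su(2) \) makes the trivial summand too large. A line \( \bR \leqslant \bR^{3} \) gives a summand of dimension \( 5 \), hence \( \lie{k} = 0 \); and \( \bR^{2}, \bR^{3} \leqslant \bR^{3} \) give summands of dimension \( \geqslant 3 \), forcing \( \lie{k} \leqslant \su(2) \), which is impossible for an abelian algebra of dimension \( \geqslant 2 \). (By contrast, \( \bR \) with non-trivial projection to \( \su(2) \) survives precisely because then \( \su(2) \) is not fixed, so the summand is only \( 3 \).) The two genuinely delicate candidates are \( \bR^{3} \) with one-dimensional projection to \( \su(2) \) and \( \bR^{2} + \su(2) \): each has rank three yet only a one-dimensional guaranteed trivial summand. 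Here I would combine the constraints --- the summand being nonzero places \( \lie{k} \) in \( \lie{g}_{2} \) or \( \su(3) \) (rank \( \leqslant 2 \)) or in \( \su(2) \) (rank \( \leqslant 1 \)), each contradicting \( \rank\lie{k} = 3 \).

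I expect this last step to be the main obstacle, precisely because the bare bound \( \rank\lie{k} \leqslant 3 \) is not enough: a rank-three torus does embed in \( \Spin(7) \) as a maximal torus, acting on \( \bR^{8} \) with every weight non-zero and so with no trivial summand at all. The eliminations of \( \bR^{3} \)-with-twist and of \( \bR^{2} + \su(2) \) therefore hinge on first showing that a trivial summand is genuinely present --- that is, on the centrality of \( \bR^{3} \), which contributes a fixed direction whenever \( \bR^{3} \not\leqslant \lie{k} \) --- so that the sharper rank-two bound coming from \( G_{2} \) and \( \SU(3) \) can bite. Checking that the four surviving algebras are consistent with \cref{sec:spin7} is then routine; for instance \( \bR + \su(2) \cong \lie{u}(2) \leqslant \su(3) \) acts on \( \bC^{3} = \bC^{2} + \bC \) without fixed vectors, matching a two-dimensional trivial summand.
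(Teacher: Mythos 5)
Your proposal is correct and follows essentially the same route as the paper: both enumerate the subalgebras of \( \lie{g} = \bR^{3} + \su(2) \) and eliminate candidates by combining the trivial summands forced by centrality of \( \bR^{3} \) with the constraints of \cref{sec:spin7} (a trivial summand places \( \lie{k} \) inside \( \lie{g}_{2} \), \( \su(3) \) or \( \su(2) \), whose ranks kill the rank-three candidates, while \( \spin(7) \) itself bounds the rank by three). The only differences are organisational --- you sort by the projection to \( \su(2) \) where the paper sorts by \( \dim\lie{k} \) --- and that you make explicit two points the paper leaves implicit: the faithfulness of the isotropy representation, and the elimination of the four-dimensional abelian subalgebra \( \bR^{3} + \bR Y \), \( Y \in \su(2) \).
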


\begin{proof}
  We have \( \lie{g} = \lie{k} + \lie{m} \), where \( \lie{m} \) is a
  complementary representation of~\( \lie{k} \), and the tangent model has the
  form \( T_{p}M = \lie{m} + V \) as a sum of \( \lie{k} \)-modules.
  This \( \lie{k} \)-module structure of \( T_{p}M \) needs to agree with that
  of restriction of the \( \Spin(7) \)-representation to a subalgebra isomorphic
  to~\( \lie{k} \).
  When this is satisfied we shall say that \( T_{p}M \) has type~\( \Spin(7) \).

  Suppose first that \( \dim\lie{k} = 1 \).
  If \( \lie{k} \) projects to \( \Set{0} \) in \( \su(2) \subset \lie{g} \), then
  \( \lie{m} = \bR^{5} \), and \( T_{p}M = \bR^{5} + V \), which is of
  \( \Spin(7) \) type only if it is a trivial \( \lie{k} \)-module.
  But then \( \lie{k} \) lies in the effective kernel of \( \lie{g} \),
  contradicting that \( G \) acts almost effectively.
  We thus have a non-zero projection of \( \lie{k} \) to \( \su(2) \).
  This gives \( \lie{m} = \bR^{3} + L \), with \( L \) a non-trivial
  two-dimensional representation and \( T_{p}M = \bR^{4} + \bC^{2} \) with
  \( \bC^{2} = L + L^{-1} \).

  For \( \dim\lie{k} = 2 \), suppose \( \lie{m} \) is trivial.
  Then \( T_{p}M = \bR^{4} + \bC^{2} \) with \( \lie{k} \) acting as a
  subalgebra of \( \su(2) \).
  But \( \su(2) \) only has rank one, so we can not have an effective action.
  Thus we have \( \lie{m} = \bR^{2} + L \), so \( \lie{k} \) has non-trivial
  projection to \( \su(2) \), and \( T_{p}M = \bR^{2} + L + V \) with
  \( L + V \cong \bC^{3} \) as a representation induced by the maximal torus
  in~\( \SU(3) \).

  When \( \dim\lie{k} = 3 \), we have either \( \lie{k} \) is Abelian or
  isomorphic to \( \su(2) \).
  The Abelian case does not occur, \( \lie{m} \) must contain a trivial
  submodule, so \( \lie{k} \) has to act on \( T_{p}M \) as subalgebra of
  \( \lie{g}_{2} \), which is only of rank two.
  Thus \( \lie{k} = \su(2) \), \( \lie{m} = \bR^{3} \) and
  \( T_{p}M = \bR^{4} + \bC^{2} \) with \( \bC^{2} \) the standard
  representation of \( \su(2) \).

  For \( \dim\lie{k} = 4 \), we have \( \lie{k} = \bR + \su(2) \) and
  \( \lie{m} = \bR^{2} \).
  It follows that \( T_{p}M = \bR^{2} + \bC^{3} \) with \( \lie{k} \) acting as
  a subalgebra of~\( \su(3) \).  This implies that \( K_{0} \) acts as
  \begin{equation}
    \label{eq:k4-u2}
    \Un(2) = \Set[\bigg]{
    \begin{pmatrix}
      A & 0 \\
      0 & \det A^{-1}
    \end{pmatrix}
    \with A \in \Un(2)
    }.
  \end{equation}

  If \( \dim\lie{k} = 5 \) then \( \lie{k} \) has rank three, but must act as a
  subalgebra of \( \lie{g}_{2} \), which is not possible.
  Similarly for \( \dim\lie{k} = 6 \), the rank is four, but the tangent action
  must be as a subalgebra of \( \spin(7) \), which is only rank three.
\end{proof}

\emph{Tubular neighbourhoods.}
We now turn to descriptions of the local models for the possible tubular
neighbourhoods
\begin{equation*}
  G \times_{K} V.
\end{equation*}

If \( \dim{K} = 4 \), then \( K_{0} = S^{1} \times \SU(2) \) with
\( S^{1} \leqslant T^{3} \leqslant T^{3} \times \SU(2) \).
As in Cox, Little \& Schenck \cite[Exercise~1.3.5]{Cox-LS:toric}, or using Smith
normal form, choosing a primitive generator \( v \) for
\( \lie{s}^{1} \leqslant \lie{t}^{3} \), we may extend~\( v \) to a
\( \bZ \)-basis of the integral lattice in~\( \lie{t}^{3} \).
As a consequence, we can assume that \( S^{1} \) is the last factor
of~\( T^{3} = S^{1} \times S^{1} \times S^{1} \).

The group \( K_{0} \) acts on \( V = \bC^{3} \) as in~\eqref{eq:k4-u2}.
In particular, the effective kernel contains \( (-1,-1_{2}) \), implying that
\( (1,1,-1,-1_{2}) \) must lie in~\( H \).
If \( K \ne K_{0} \), then \( K/K_{0} \leqslant G/K_{0} \cong T^{2} \) is generated by a
cyclic subgroup~\( Z \) of~\( T^{2} \) and has a representative in the first
factor of~\( T^{2} \times K_{0} \), so commuting with~\( K_{0} \).
The action of \( Z \) on \( V = \bC^{3} \) commutes with that of~\( K_{0} \) and
is contained in~\( \SU(3) \).
This implies that elements of~\( Z \) preserve the splitting
\( V = \bC^{2} + \bC \) of~\eqref{eq:k4-u2}, and the determinant~\( 1 \)
condition implies that \( Z \) acts as a subgroup of~\( \Un(2) \); so \( Z \)
must lie in its centre.
In particular, \( H \) contains a subgroup~\( Z' \) isomorphic to~\( Z \) that
commutes with~\( K \) and so is central in~\( G \).
The group \( H \) is Abelian, generated by \( Z' \) and the element
\( (1,1,-1,-1_{2}) \).
Note that the projection of \( H \) to \( \SU(2) \) is contained in
\( \Set{\pm1_{2}} \).
We may quotient \( G \) by the kernel of this projection to get a new group
isomorphic to \( G \), but with \( H \) isomorphic to~\( \bZ/2 \).
Thus we may assume \( K \) is connected and hence \( K = S^{1} \times \SU(2) \),
\( G = T^{2} \times K \) and \( H \cong \bZ/2 \), so \( G/H \cong T^{2} \times \Un(2) \).

For the action of \( \Un(2) \) on \( \bC^{3} \) given by~\eqref{eq:k4-u2}, the
stabilisers are
\begin{enumerate}
\item \( \Un(2) \) at~\( 0 \),
\item \( \SU(2) \) at \( (0,0,z) \), \( z\ne0 \),
\item isomorphic to \( \Un(1) = \Set{(1,e^{it},e^{-it})} \) at
  \( (w_{1},w_{2},0) \ne (0,0,0) \),
\item \( \bZ/2 \) otherwise.
\end{enumerate}
We thus get the following orbit structure near~\( p \)
\begin{enumerate}
\item the orbit through~\( p \) is \( G/K \cong T^{2} \),
\item there is a family of orbits parameterised by \( r = \abs{z} > 0 \) of the
  form \( G/\SU(2) \cong T^{3} \),
\item there is a family of orbits parameterised by
  \( r = \norm{(w_{1},w_{2})} > 0 \) of the form \( G/\Un(1) = T^{2} \times S^{3} \),
\item generic orbits close to~\( p \) are \( G/H \cong T^{2} \times \Un(2) \).
\end{enumerate}
The orbit space near \( p \) is a positive quadrant in~\( \bR^{2} \), with
\( \SU(2) \) stabilisers along one edge, and \( \Un(1) \) stabilisers along the
other edge.

For \( \dim K = 3 \), we have that \( K_{0} = \SU(2) \) and hence \( K \) splits
as \( Z \times K_{0} \) with \( Z \leqslant T^{3} \).
Thus \( G / Z \cong G \) and we may assume \( K = K_{0} \).
We have \( V = \bR + \bC^{2} \), with \( \bC^{2} \) as the standard
representation of~\( \SU(2) \).
Here all non-zero orbits are copies of \( S^{3} \cong \SU(2) \), so the stabilisers
on the tubular neighbourhood are \( \SU(2) \) along~\( \bR \times \Set{0} \) and
trivial otherwise.

Let us now consider the case when \( \dim K = 2 \).
Then \( K_{0} = T^{2} \) with non-zero projection to~\( \SU(2) \).
This acts on \( L + V = \bC^{3} \) as a maximal torus in~\( \SU(3) \).
Away from \( 0 \in V = \bC^{2} \) the non-trivial stabiliser occur along the
complex coordinate axes, and are each isomorphic to~\( \Un(1) \).
By the analysis for the case \( \dim K = 1 \), both of these generators have
non-trivial projection to~\( \su(2) \), so the corresponding \( G \)-orbits are
\( T^{2} \times S^{3} \) in families parameterised by \( r > 0 \).
The \( K_{0} \)-orbit space of~\( \bC^{2} \) is a positive quadrant
in~\( \bR^{2} \), with stabiliser \( T^{2} \) at the origin and
stabilisers~\( S^{1} \) along each edge.

\section{Specialisations to smaller geometries}
\label{sec:spec-small-geom}

Using the hierarchy of \cref{sec:ricci-flat-hierarchy}, we shall now discuss how
to apply the analysis of \cref{sec:multi-toric-actions} to explore the
lower-dimensional Ricci-flat geometries.

\subsection{\texorpdfstring{\( G_{2} \)}{G2}-manifolds}
\label{sec:G2-m}

Suppose \( M^{7} \) has a multi-toric \( G_{2} \)-structure with larger symmetry
group~\( G \).
By \cref{sec:ricci-flat-hierarchy}, this is equivalent to
\( M^{8} = S^{1} \times M^{7} \) having a multi-toric \( \Spin(7) \)-structure with
product symmetry \( S^{1} \times G \).
From~\cref{prop:Spin7-sym}, we immediately get the following.

\begin{proposition}
  \label{prop:G2-sym}
  Let \( G \) be a compact Lie group of rank at least three acting almost
  effectively and preserving a \( G_{2} \)-structure on a non-compact
  manifold~\( M^{7} \).
  Then either \( G \) is Abelian, or the principal orbit \( G/H \) is given by
  one of the following pairs \( (\lie{g},\lie{h}) \):
  \begin{enumerate}
  \item\label{item:G2SU3} \( (\bR + \su(3),\su(2)) \), with \( \su(2) \) a
    highest root subalgebra,
  \item\label{item:G23SU2} \( (3\su(2), \su(2)) \), with \( \su(2) \) embedded
    diagonally,
  \item\label{item:G22SU2} \( (\bR + 2\su(2),\bR) \), where the projection of
    \( \bR \) to \( 2\su(2) \) has opposite weights in the two summands,
  \item\label{item:G2SU2} \( (\bR^{3} + \su(2), \Set{0}) \),
  \item\label{item:G2coh2} \( (\bR^{2} + \su(2), \Set{0}) \).
  \end{enumerate}
  The last case is of cohomogeneity two, the others of cohomogeneity one. \qed
\end{proposition}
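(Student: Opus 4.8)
The plan is to deduce this directly from the \( \Spin(7) \)-classification in \cref{prop:Spin7-sym} by means of the product construction of \cref{sec:ricci-flat-hierarchy}. First I would form the eight-manifold \( M^{8} = S^{1} \times M^{7} \), endowed with the product \( \Spin(7) \)-structure \( \Phi = e^{0} \wedge \varphi + \Hodge_{7}\varphi \), where \( e^{0} \) is the invariant coframe on the circle factor. Since \( G \) preserves \( \varphi \) and \( \Hodge_{7}\varphi \), and the circle acts by rotation fixing \( e^{0} \), the enlarged group \( \widetilde{G} = S^{1} \times G \) preserves \( \Phi \). I would then verify that \( \widetilde{G} \) meets the hypotheses of \cref{prop:Spin7-sym}: its rank is \( 1 + \rank G \geqslant 4 \); its action is almost effective, since \( G \) acts almost effectively on \( M^{7} \) and the circle acts freely on its own factor, so the combined ineffective kernel is finite; and \( M^{8} \) is non-compact because \( M^{7} \) is.

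Applying \cref{prop:Spin7-sym} then gives a dichotomy. If \( \widetilde{G} \) is Abelian then \( G \) is Abelian, which is the first alternative. Otherwise the principal-orbit pair \( (\widetilde{\lie{g}}, \widetilde{\lie{h}}) \) is one of \eqref{eq:pr-1}--\eqref{eq:pr-45b}. To convert these into pairs for \( G \) I would use two observations: first, that \( \widetilde{\lie{g}} = \bR \oplus \lie{g} \) as Lie algebras, the extra summand being the central Lie algebra of the added circle; and second, that the stabiliser of a point \( (s,x) \) is \( \Set{1} \times \stab_{G}(x) \), so \( \widetilde{\lie{h}} = \lie{h} \). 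Since the added \( \bR \) is a one-dimensional central ideal and the centre of each candidate \( \widetilde{\lie{g}} \) is precisely its Abelian summand, removing it recovers \( \lie{g} \) unambiguously. Running through the list, \eqref{eq:pr-1} yields (i), \eqref{eq:pr-2} yields (ii), \eqref{eq:pr-3} yields (iii), \eqref{eq:pr-45a} yields (iv), and \eqref{eq:pr-45b} yields (v); the structural descriptions (highest root subalgebra, diagonal embedding, opposite weights) concern only the semisimple part and the stabiliser and so carry over verbatim.

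Finally, the cohomogeneity statement transfers automatically: the cohomogeneity of \( G \) on \( M^{7} \) equals \( 7 - (\dim G - \dim H) \), while that of \( \widetilde{G} \) on \( M^{8} \) equals \( 8 - (1 + \dim G - \dim H) \), the same integer. Hence the four cohomogeneity-one cases and the single cohomogeneity-two case of \cref{prop:Spin7-sym} match exactly the assertion that (i)--(iv) are of cohomogeneity one and (v) of cohomogeneity two. There is no genuine obstacle here, since all the geometric content has already been absorbed into \cref{prop:Spin7-sym}; the only point requiring care is the well-definedness of stripping off the central \( \bR \), which holds because that summand lies in the centre of \( \widetilde{\lie{g}} \) and not in \( \widetilde{\lie{h}} \) (the circle acts freely), so the quotient \( \widetilde{\lie{g}}/\bR \cong \lie{g} \) is determined up to isomorphism.
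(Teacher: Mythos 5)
Your proposal is correct and is precisely the paper's own argument: the paper deduces \cref{prop:G2-sym} by passing to \( M^{8} = S^{1} \times M^{7} \) with product symmetry \( S^{1} \times G \) and invoking \cref{prop:Spin7-sym}, exactly as you do. Your additional care over the hypotheses (rank, almost effectiveness, non-compactness) and the well-definedness of stripping the central \( \bR \) simply makes explicit what the paper leaves as ``immediate''.
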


We now consider solutions with holonomy equal to \( G_{2} \).
In contrast to the \( \Spin(7) \)-case, cohomogeneity-one metrics with enhanced
symmetry are known to exist.

\begin{theorem}
  Suppose \( M \) is a complete, simply-connected, seven-dimensional manifold
  with holonomy equal to \( G_{2} \), preserved by an almost effective,
  non-Abelian, compact, connected, symmetry group \( G \) of rank at least
  three.
  Then \( G \) is finitely covered by \( \SU(2)^{3} \),
  \( S^{1} \times \SU(2)^{2} \) or \( T^{2} \times \SU(2) \).

  When \( G = \SU(2)^{3} \), the action is of cohomogeneity one and \( M \)~is
  isometric to the Bryant-Salamon structure \cite{Bryant-Salamon:exceptional} on
  \( S^{3} \times \bR^{4} \) realised as the spin bundle of~\( S^{3} \).

  When \( G = S^{1} \times \SU(2)^{2} \), the action is of cohomogeneity one and the
  geometries are classified in Foscolo, Haskins \& Nordström
  \cite[Theorem~E]{Foscolo-HN:G2TNEH}, giving solutions on
  \( S^{3} \times \bR^{4} \) and on \( M_{m,n} \), the total space of the circle
  bundle over \( K_{\CP(1)^{2}} \) with Chern class \( (m,-n) \).

  For \( G = T^{2} \times \SU(2) \), the action is of cohomogeneity two.
\end{theorem}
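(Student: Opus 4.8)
The plan is to establish the trichotomy by rerunning, now for holonomy exactly \( G_{2} \), the completeness-and-topology argument used in the \( \Spin(7) \) case, importing the representation-theoretic constraints of \cref{sec:spin7} through the product \( M^{8} = S^{1} \times M^{7} \). By \cref{prop:G2-sym} the principal orbit is one of the five pairs~(i)--(v). A nontrivial Killing field on a compact Ricci-flat manifold is parallel by a Bochner argument, whereas full holonomy \( G_{2} \) admits no parallel vector field; hence \( M \) is non-compact. Case~(v), \( (\bR^{2}+\su(2),\Set{0}) \), is of cohomogeneity two and yields \( G \) finitely covered by \( T^{2} \times \SU(2) \), while (i)--(iv) are of cohomogeneity one. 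For the latter I would invoke Mostert~\cite{Mostert:compact} and the Cheeger-Gromoll theorem~\cite{Cheeger-G:nn-Ricci}, exactly as before, so that the orbit space is \( [0,\infty) \) with a single singular orbit \( G/K \), \( K/H \cong S^{r} \) with \( r \geqslant 1 \), and \( K \) connected after passing to a finite cover \( G = T^{k} \times G_{s} \).

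The topological input is the homotopy sequence of \( S^{r} = K/H \to G/H \to G/K \) with \( \pi_{1}(G/K) = \pi_{1}(M) = 1 \). Whenever \( \rank\lie{z}(\lie{h}) < \rank\lie{z}(\lie{g}) = k \), the group \( \pi_{1}(G/H) \) is infinite, forcing \( r = 1 \) and \( \rank\lie{z}(\lie{k}) = 1 + \rank\lie{z}(\lie{h}) \); on the other hand \( \pi_{1}(G/K)=1 \) requires \( \pi_{1}(K) \to \pi_{1}(G) = \bZ^{k} \) to be surjective, so \( \rank\lie{z}(\lie{k}) \geqslant k \). A direct check shows the strict inequality \( \rank\lie{z}(\lie{h}) < \rank\lie{z}(\lie{g}) \) holds precisely in cases~(i) and~(iv); in cases~(ii) and~(iii) the two ranks agree, \( r \) may exceed one, and complete examples persist, so (i) and~(iv) are the cases I must exclude.

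Case~(iv), \( (\bR^{3}+\su(2),\Set{0}) \), falls immediately: here \( k = 3 \) and \( \rank\lie{z}(\lie{h}) = 0 \), so the two displayed conditions give \( 1 = \rank\lie{z}(\lie{k}) \geqslant 3 \), which is absurd. Case~(i), \( (\bR+\su(3),\su(2)) \), is the main obstacle, since with \( k = 1 \) the fundamental-group count is inconclusive and a finer analysis of the singular orbit is required. Here \( \rank\lie{z}(\lie{k}) = 1 \) together with \( \lie{k} \supseteq \su(2) \) forces \( \lie{k} = \bR + \su(2) \); the normal module is a two-dimensional rotation module and \( \lie{g}/\lie{k} \cong \bR + \bC^{2} \) as \( \su(2) \)-modules, so \( T_{p}M^{7} = \bR^{3} + \bC^{2} \). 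Passing to \( T_{(\ast,p)}M^{8} = \bR^{4} + \bC^{2} \) and applying the tangent-representation list of \cref{sec:spin7}, a stabiliser fixing an \( \bR^{4} \) must lie in \( \SU(2) \); but \( \dim\lie{k} = 4 \) makes this impossible, excluding~(i).

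It remains to identify the survivors. The cohomogeneity-one pairs that persist are~(ii), \( \lie{g} = 3\su(2) \), giving \( G \) finitely covered by \( \SU(2)^{3} \), and~(iii), \( \lie{g} = \bR + 2\su(2) \), giving \( S^{1} \times \SU(2)^{2} \). To finish I would match these to the literature: an \( S^{1} \times \SU(2)^{2} \)-invariant structure is one of the Foscolo--Haskins--Nordström metrics~\cite{Foscolo-HN:G2TNEH} on \( S^{3} \times \bR^{4} \) or on \( M_{m,n} \); and an \( \SU(2)^{3} \)-invariant one, being in particular invariant under an \( S^{1} \times \SU(2)^{2} \subset \SU(2)^{3} \) that still acts with cohomogeneity one, is singled out among these by its extra symmetry as the Bryant--Salamon metric~\cite{Bryant-Salamon:exceptional} on the spin bundle of \( S^{3} \). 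The most delicate bookkeeping here is tracking the type and components of \( K \), and the finite covers introduced, so as to land on the precise families in those classifications.
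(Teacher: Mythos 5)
Your overall skeleton is the same as the paper's: non-compactness, Mostert plus Cheeger--Gromoll to get a single singular orbit \( G/K \) with \( K/H = S^{r} \), the homotopy sequence \eqref{eq:p1Sp1M} with the centre-rank count, and the exclusion of case~(iv) \( (\bR^{3}+\su(2),\Set{0}) \) exactly as in the paper. The gap is in your exclusion of case~(i), \( (\bR+\su(3),\su(2)) \), and it is a genuine one. You compute \( T_{p}M^{7} = \bR^{3} + \bC^{2} \) \emph{as \( \su(2) = \lie{h} \)-modules} and then apply the tangent-representation list of \cref{sec:spin7} to the four-dimensional algebra \( \lie{k} \). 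But that list constrains the representation of the \emph{full} stabiliser at \( p \), which on the singular orbit is \( K \), not \( H \): the trivial summand must be trivial for \( \lie{k} \), and it is not. The \( \bR \)-factor of \( \lie{k} = \bR + \su(2) \) acts by rotation on the normal module \( V \cong \bC \) (this is exactly what makes \( K/H = S^{1} \)), and, for the correct embedding, with non-zero weight on the \( \bC^{2} \subset \su(3)/\lie{k} \) as well. As a \( \lie{k} \)-module one gets \( T_{p}M^{8} = \bR^{2} + \bC^{3} \) with \( \lie{k} \cong \lie{u}(2) \) acting as in \eqref{eq:k4-u2}, i.e.\ as a subalgebra of \( \su(3) \) on its standard representation --- which is precisely one of the allowed cases in \cref{sec:spin7}. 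So there is no contradiction, and no pointwise tangent-space argument can rule out case~(i): the local model is actually realised, e.g.\ by \( S^{1} \times K_{\CP(2)} \) with Calabi's metric, where the singular orbit is the zero section \( \CP(2) = \SU(3)/\Un(2) \).

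What the paper does instead is global and toric. Assuming the structure survives, it takes the rank-three torus \( T \subset S^{1}\times\SU(3) \) containing the \( S^{1} \)-factor of \( K \), with \( \lie{t} = \bR E_{1} + \bR^{2} \), and determines all one-dimensional stabilisers of the \( T \)-action on \( G/H \cong S^{1}\times S^{5} \): they are generated by \( E_{1} \), \( E_{2} \), \( E_{3} \) and \( E_{2}+\varepsilon E_{3} \), and \( E_{1} \) vanishes only on \( G/K \). Hence every two-dimensional stabiliser is spanned by \( \Set{E_{1},F} \) with \( F \in \Set{E_{2},E_{3},E_{2}+\varepsilon E_{3}} \), so at each vertex of the multi-moment map image only \emph{two} distinct edge directions meet. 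This contradicts the trivalent, zero-tension graph structure established in \cref{sec:image-graph} for simply-connected multi-Hamiltonian geometries (three edges whose primitive tangents sum to zero cannot all be parallel to two independent vectors). This is the key idea missing from your argument; without it, or some substitute global input, case~(i) remains open in your proof, and hence so does the claimed list of groups. Your treatment of the surviving cases (ii), (iii) and (v), deferring to Bryant--Salamon and Foscolo--Haskins--Nordstr\"om, matches the paper.
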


\begin{proof}
  The assumption that \( M \) is simply-connected and that \( G \) is rank at
  least three, ensures that there is a multi-Hamiltonian \( T^{3} \)-subaction.

  In the cohomogeneity-one cases \cref{prop:G2-sym}\ref{item:G2SU3}
  and~\ref{item:G2SU2}, the centre of \( H \) has smaller rank than the centre
  of~\( G \); the argument after~\eqref{eq:p1Sp1M} then implies that
  \( K/H = S^{1} \).
  For \ref{item:G2SU2} this is not compatible with \( \pi_{1}(G/K) = 1 \), so
  \cref{prop:G2-sym}\ref{item:G2SU2} does not occur.

  For \ref{item:G2SU3}, \( G = S^{1} \times \SU(3) \),
  \( K = S^{1} \times \SU(2) \) and \( H_{0} = \SU(2) \).
  The maximal torus~\( T \) of~\( G \) has rank three and may be chosen to
  contain the \( S^{1} \)-factor of~\( K \).
  We now consider the resulting graph under the multi-moment map and show that
  it can not be trivalent.

  Write \( \lie{t} = \bR E_{1} + \bR^{2} \) for the Lie algebra of~\( T \), with
  \( E_{1} \) spanning the Lie algebra of this~\( S^{1} \) and with
  \( \bR^{2} = \su(3) \cap \lie{t} \).
  We have \( G/H_{0} = S^{1} \times S^{5} \) with \( S^{5} \) as the unit sphere
  in the standard representation \( \bC^{3} \) of~\( \SU(3) \).
  The \( \bR^{2} \)-factor of \( \lie{t} \) acts only the \( S^{5} \)-factor and
  does so as the maximal torus \( T^{2} \) of~\( \SU(3) \).
  The special orbits of the \( T^{2} \)-action are thus the three circles that
  are the intersections of \( S^{5} \) with the complex coordinate axes
  in~\( \bC^{3} \).
  Let \( E_{2} \) and \( E_{3} \) generate two of these stabilisers, then for
  some choice of \( \varepsilon \in \Set{\pm1} \),
  \( E_{2} + \varepsilon E_{3} \) generates the third stabiliser.
  Since these span~\( \bR^{2} \subset \lie{t} \), it follows that in the finite
  quotient \( G/H \) of \( G/H_{0} \) these stabilisers are distinct.
  The one-dimensional stabilisers of the \( T \)-action are thus generated by
  \( E_{1} \), \( E_{2} \), \( E_{3} \) and \( E_{2} + \varepsilon E_{3} \).
  The first vector field vanishes only on \( G/K \), so there are three
  two-dimensional stabilisers spanned by \( \Set{E_{1},F} \) for
  \( F = E_{2} \), \( E_{3} \) and \( E_{2} + \varepsilon E_{3} \).
  As there are only two different types of one-dimensional stabiliser that meet
  at points with two-dimensional stabiliser, the resulting graph is only
  bivalent, and so case~\ref{item:G2SU3} can not extend to a simply-connected
  cohomogeneity-one structure with full holonomy.

  The remaining cohomogeneity-one cases are classified in Foscolo, Haskins \&
  Nordström \cite{Foscolo-HN:G2TNEH}.
\end{proof}

For the cohomogeneity-two case, solutions may be obtained by restricting the
symmetries of the cohomogeneity-one families.
General local solutions have been shown to exist in Aslan \& Trinca
\cite{Aslan-T:G2-cohom2}.

\subsection{Calabi-Yau manifolds in real dimension six}
\label{sec:CY6-m}

Suppose \( M^{6} \) has a multi-Hamiltonian \( \SU(3) \)-structure with larger
symmetry group~\( G \).
By \cref{sec:ricci-flat-hierarchy}, this is equivalent to
\( M^{7} = S^{1} \times M^{6} \) having a multi-Hamiltonian
\( G_{2} \)-structure with product symmetry \( S^{1} \times G \).
In this context, \Cref{prop:G2-sym} gives the following.

\begin{proposition}
  \label{prop:SU3}
  Suppose \( G \) is a compact Lie group of rank at least two acting almost
  effectively and preserving an \( \SU(3) \)-structure on a non-compact
  manifold~\( M^{6} \).
  Then either \( G \) is Abelian, or the principal orbit \( G/H \) is given by
  one of the following pairs \( (\lie{g},\lie{h}) \):
  \begin{enumerate}
  \item\label{item:CYSU3} \( (\su(3),\su(2)) \), with \( \su(2) \) a highest
    root subalgebra,
  \item\label{item:CY2SU2} \( (2\su(2),\bR) \), where \( \bR \) has opposite
    weights in the two summands,
  \item\label{item:CYSU2} \( (\bR^{2} + \su(2), \Set{0}) \),
  \item\label{item:CYcoh2} \( (\bR + \su(2), \Set{0}) \).
  \end{enumerate}
  The last case is of cohomogeneity two, the others of cohomogeneity one. \qed
\end{proposition}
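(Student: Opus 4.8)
The plan is to mirror the reduction used for \cref{prop:G2-sym}, applying the dimensional shift one further step down the hierarchy. By \cref{sec:ricci-flat-hierarchy}, a multi-Hamiltonian \( \SU(3) \)-structure on \( M^{6} \) with symmetry group \( G \) is equivalent to a multi-Hamiltonian \( G_{2} \)-structure on \( M^{7} = S^{1} \times M^{6} \) with product symmetry \( S^{1} \times G \). The hypothesis that \( \rank G \geqslant 2 \) translates exactly into \( \rank(S^{1} \times G) \geqslant 3 \), which is the standing assumption of \cref{prop:G2-sym}. Thus the whole classification is obtained by feeding \( S^{1} \times G \) into \cref{prop:G2-sym} and then stripping off the redundant \( S^{1} \)-factor.

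First I would apply \cref{prop:G2-sym} to the group \( \tilde{G} = S^{1} \times G \), yielding one of its five pairs \( (\tilde{\lie{g}},\lie{h}) \) for the principal orbit (or \( \tilde{G} \) Abelian, which corresponds to \( G \) Abelian). In each non-Abelian case I would then write \( \tilde{\lie{g}} = \bR + \lie{g} \), where the \( \bR \)-summand is the Lie algebra of the \( S^{1} \)-factor acting on itself, and read off \( \lie{g} \) and \( \lie{h} \). Concretely: case~\ref{item:G2SU3}, \( \bR + \su(3) \), gives \( \lie{g} = \su(3) \) with the same highest-root \( \su(2) \) isotropy, producing~\ref{item:CYSU3}; case~\ref{item:G23SU2}, \( 3\su(2) \), has no \( \bR \)-summand to remove and must therefore be discarded, since the \( S^{1} \)-factor cannot be matched; case~\ref{item:G22SU2}, \( \bR + 2\su(2) \), gives \( \lie{g} = 2\su(2) \) with \( \lie{h} = \bR \) of opposite weights, producing~\ref{item:CY2SU2}; case~\ref{item:G2SU2}, \( \bR^{3} + \su(2) \), gives \( \lie{g} = \bR^{2} + \su(2) \) with trivial isotropy, producing~\ref{item:CYSU2}; and the cohomogeneity-two case~\ref{item:G2coh2}, \( \bR^{2} + \su(2) \), gives \( \lie{g} = \bR + \su(2) \), producing~\ref{item:CYcoh2}. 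The cohomogeneity of each surviving case is unchanged, since splitting off a free circle factor alters neither \( \dim G - \dim H \) nor the principal isotropy.

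The one point requiring care is the discarding of case~\ref{item:G23SU2} and, more generally, justifying that the \( \bR \)-summand genuinely corresponds to the external \( S^{1} \). Here I would invoke the product structure: the \( S^{1} \) acting on the first factor of \( S^{1} \times M^{6} \) acts trivially on \( M^{6} \), so its generator spans a one-dimensional ideal of \( \tilde{\lie{g}} \) that acts trivially on the tangent space to \( M^{6} \) and lies in the centre. For \( 3\su(2) \), which is semisimple with trivial centre, no such central \( \bR \)-summand exists, so this configuration cannot arise from a product \( S^{1} \times G \); equivalently, \( G \) would have to be \( 3\su(2)/\bR \), which is not a Lie algebra of the required form. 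I expect this bookkeeping—matching the external circle to the correct Abelian summand and verifying that the isotropy embeds compatibly after removing it—to be the only genuine obstacle; the remainder is a direct transcription of \cref{prop:G2-sym}.
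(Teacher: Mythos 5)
Your proposal is correct and is essentially the paper's own argument: the paper likewise deduces \cref{prop:SU3} by viewing the \( \SU(3) \)-structure on \( M^{6} \) as a \( G_{2} \)-structure on \( S^{1}\times M^{6} \) with symmetry \( S^{1}\times G \) of rank at least three and reading the pairs off \cref{prop:G2-sym}, stating the result with no further detail. Your case-by-case bookkeeping --- in particular using the triviality of the centre of \( 3\su(2) \) to rule out matching the external circle in case~\ref{item:G23SU2} --- correctly fills in exactly the step the paper leaves implicit.
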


\begin{theorem}
  Suppose \( M \) is a complete, simply-connected, six-dimensional manifold with
  holonomy equal to \( \SU(3) \) and an almost effective action of a
  non-Abelian, compact, connected, symmetry group \( G \) of rank at least two,
  preserving the \( \SU(3) \)-structure.
  Then \( G \) is finitely covered by \( \SU(3) \), \( \SU(2)^{2} \) or
  \( S^{1} \times \SU(2) \).
  The first two are actions of cohomogeneity one, and complete examples are
  known; the last is of cohomogeneity two.
\end{theorem}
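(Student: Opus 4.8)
The plan is to rerun the completeness argument from the \( \Spin(7) \)- and \( G_{2} \)-theorems, now starting from the four possibilities of \cref{prop:SU3}. Since \( M \) is simply connected and \( \rank G \geqslant 2 \), the contained \( T^{2} \)-action is multi-Hamiltonian, so \( (\lie{g},\lie{h}) \) must be one of \ref{item:CYSU3}--\ref{item:CYcoh2}. Holonomy \( \SU(3) \) is Ricci-flat, so for each of the three cohomogeneity-one cases I would invoke Mostert~\cite{Mostert:compact} and the Cheeger-Gromoll theorem~\cite{Cheeger-G:nn-Ricci}, exactly as after the main \( \Spin(7) \)-theorem, to conclude that the orbit space is \( [0,\infty) \), that there is a single singular orbit \( G/K \) which is a deformation retract of~\( M \), so that \( \pi_{1}(G/K)=\pi_{1}(M)=1 \) with \( K \) connected, and that \( K/H=S^{r} \).

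The only case I then need to exclude is~\ref{item:CYSU2}, where \( \lie{g}=\bR^{2}+\su(2) \) and \( \lie{h}=\Set{0} \). Here \( \rank\lie{z}(\lie{h})=0<2=\rank\lie{z}(\lie{g}) \), so the argument following~\eqref{eq:p1Sp1M} forces \( \pi_{1}(G/H) \) to be infinite, hence \( r=1 \), giving \( \dim K=1 \) and, by connectedness, \( K\cong S^{1} \). But, up to finite cover, \( G=T^{2}\times\SU(2) \) with \( \pi_{1}(G)=\bZ^{2} \), and the map \( \pi_{1}(K)=\bZ\to\pi_{1}(G)=\bZ^{2} \) cannot be surjective; the homotopy sequence of \( K\to G\to G/K \) then gives \( \pi_{1}(G/K)\ne1 \), contradicting simple-connectivity. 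This rules out~\ref{item:CYSU2} and leaves \ref{item:CYSU3}, \ref{item:CY2SU2} and the cohomogeneity-two case~\ref{item:CYcoh2}, for which \( G \) is finitely covered by \( \SU(3) \), \( \SU(2)^{2} \) and \( S^{1}\times\SU(2) \) respectively.

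Finally I would check that \ref{item:CYSU3} and \ref{item:CY2SU2} really occur, which I expect to be the main obstacle. The temptation is to transport the exclusion of \cref{prop:G2-sym}\ref{item:G2SU3} through the hierarchy of \cref{sec:ricci-flat-hierarchy}; but the \( G_{2} \)-lift \( S^{1}\times M \) of a case~\ref{item:CYSU3} manifold has holonomy only \( \SU(3)\subsetneq G_{2} \), so the bivalency obstruction used there, which relied on \emph{full} \( G_{2} \)-holonomy and hence on a torus \( T^{3} \), does not apply. Indeed, for \( G=\SU(3) \) the torus \( T^{2} \) acts near the \( T^{2} \)-fixed point of the singular orbit exactly as in the flat Calabi-Yau model~\eqref{eq:CYflatmm}, so the three coordinate-axis edges meet at a single trivalent vertex and no contradiction arises; this case is realised by the Calabi Ricci-flat Kähler metric on the canonical bundle \( K_{\CP^{2}} \) of~\( \CP^{2} \). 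Case~\ref{item:CY2SU2}, with \( G=\SU(2)^{2} \), is realised by the Stenzel metric on the deformed conifold \( T^{*}S^{3} \) and by the Ricci-flat metric on the small resolution of the conifold, while~\ref{item:CYcoh2} is the cohomogeneity-two case \( G=S^{1}\times\SU(2) \). The safest way to settle the non-obstruction of~\ref{item:CYSU3} is thus to exhibit these explicit complete metrics rather than to argue purely combinatorially.
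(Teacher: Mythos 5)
Your proposal is correct and takes essentially the same route as the paper: the pair \( (\bR^{2}+\su(2),\Set{0}) \) is excluded by the fundamental-group argument after \eqref{eq:p1Sp1M} (exactly as you run it), and the remaining cases are settled by citing the known complete examples, namely Calabi's metric on \( K_{\CP(2)} \), the Candelas--de la Ossa metric on the small resolution of the conifold, and the Stenzel metric on \( T^{*}S^{3} \). One small inaccuracy in your supplementary discussion: the sharper reason the \( G_{2} \) bivalency obstruction does not transport is that the lift \( S^{1}\times M^{6} \) is never simply connected, so the constraint \( \pi_{1}(G/K)=1 \), which is what pins the circle factor of \( K \) to the \( S^{1} \)-factor of \( G \) and forces every two-dimensional stabiliser to contain \( E_{1} \), is unavailable --- not merely the holonomy reduction; but since you, like the paper, ultimately rely on the explicit complete metrics to establish existence, this does not affect the correctness of your argument.
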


\begin{proof}
  We have a multi-Hamiltonian action of~\( T^{2} \) on~\( M \).
  The arguments after~\eqref{eq:p1Sp1M} rule out case~\ref{item:CYSU2}
  of~\cref{prop:SU3}.
  We just need to recall existence of solutions in the other cohomogeneity one
  cases from the literature.

  For \( G = \SU(3) \), examples are provided by Calabi's metrics
  \cite{Calabi:kaehler} on the canonical bundle \( K \) of \( \CP(2) \).
  By Dancer \& Wang \cite{Dancer-W:KE}, these are the only irreducible examples.

  When \( G = \SU(2)^{2} \) examples are provided by the conifold metrics of
  Candelas \& de la Ossa \cite{Candelas-O:conifold} on the total space of
  \( 2\mathcal{O}(-1) \) over \( \CP(1) \) and by the metrics of
  Stenzel~\cite{Stenzel:Ricci-flat} on \( T^{*}S^{3} \).
  Restricting to the action of a subgroup \( S^{1} \times \SU(2) \) provides
  cohomogeneity two examples, that admit enhanced symmetry.
\end{proof}

\begin{remark}
  For \( G = S^{1} \times \SU(2) \) acting as Kähler isometries, the action is
  of cohomogeneity two, and non-trivial solutions on \( \bC^{3} \) have been
  constructed in Li~\cite{Li:CY-C3}.
  However, the \( S^{1} \)-factor does not preserve the complex volume form, but
  rotates it.
\end{remark}

\subsection{HyperKähler four-manifolds}
\label{sec:hyperk-four-manif}

Finally, we will briefly discuss complete hyperKähler four-manifolds with a
tri-Hamiltonian action of a compact non-Abelian group~\( G \).
If the principal orbit is~\( G/H \), then \( H \) acts trivially on the normal
bundle.
However, the tangent representation is a subrepresentation of \( \SU(2) \)
on~\( \bC^{2} \).
As this contains no trivial subrepresentation, \( \lie{h} = \Set{0} \) and
\( \dim G \leqslant 3 \).
We conclude that \( \lie{g} = \su(2) \).
By the general classification of hyperKähler four-manifolds with
\( \SU(2) \)-symmetry Atiyah \& Hitchin \cite{Atiyah-Hitchin:monopoles} and
Gibbons \& Pope \cite{Gibbons-P:positive} the metric is then the Eguchi-Hanson
metric on \( T^{*}\CP(1) \).

\section{Local construction for cohomogeneity two}
\label{sec:local-constr-cohom}

Returning to \( \Spin(7) \) manifolds, we have explained above that the only
candidates for simply-connected, multi-toric examples with larger symmetry have
\( G = T^{3} \times \SU(2) \).
To construct local examples, let us consider the framework of
Madsen~\cite{Madsen:Spin7T3} for the action of the \( T^{3} \)-factor.
If this factor is generated by \( U_{1},U_{2},U_{3} \) then we may consider the
multi-moment map \( \nu_{4} \) with
\( d\nu_{4} = - \Phi(U_{1},U_{2},U_{3},\any) \).
This has regular points wherever \( U_{1},U_{2},U_{3} \) are linearly
independent.
Madsen~\cite{Madsen:Spin7T3} shows that for each regular value~\( s \), the
quotient \( N = \nu_{4}^{-1}(s)/T^{3} \) is a smooth four-manifold that carries
a weakly coherent triple \( (\sigma_{1},\sigma_{2},\sigma_{3}) \) of symplectic
forms: by definition, each \( \sigma_{i} \) is symplectic and, fixing a volume
form~\( \vol \in \Omega^{4}(N) \) with the same orientation
as~\( \sigma_{1}^{2} \), the matrices
\( Q = (q_{ij}) \in \Omega^{0}(N,M_{3}(\bR)) \) defined by
\( \sigma_{i} \wedge \sigma_{j} = q_{ij}\vol \) are positive definite at each
point.
Given a two-form \( F \in \Omega^{2}(N,\bR^{3}) \) with integral periods, one
may define \( R = (r_{ij}) \in \Omega^{0}(N,M_{3}(\bR)) \) by
\( F_{i} \wedge \sigma_{j} = r_{ij}\vol \).
Theorem~3.8 of \cite{Madsen:Spin7T3} then asserts that \( \Spin(7) \)-geometries
with \( T^{3} \)-symmetry may be constructed from the \( T^{3} \)-bundle
over~\( N \) with curvature form~\( F \) and a geometric flow, provided \( R \)
is symmetric, \( R = R^{T} \), at each point.

This construction is equivariant for the action of the \( \SU(2) \)-factor
of~\( G \). Indeed, the multi-moment map \( \nu_{4} \) is
\( \SU(2) \)-invariant.
Moreover, the symplectic forms \( \sigma_{i} \) pull-back to the restriction of
\( \Phi(U_{j},U_{k},\any,\any) \), where \( (ijk)=(123) \) as permutations, on
the level sets of~\( \nu_{4} \), and so are invariant.
Additionally, the connection forms \( \vartheta_{i} \) for the
\( T^{3} \)-bundle are metric dual to \( U_{i} \) and zero on the orthogonal
complement of \( \Span\Set{U_{1},U_{2},U_{3}} \), so these are invariant.
Hence, the curvature \( F = (d\vartheta_{1},d\vartheta_{2},d\vartheta_{3}) \)
descends to an \( \SU(2) \)-invariant form.

We conclude that \( \Spin(7) \)-manifolds~\( M \) with
\( G = T^{3} \times \SU(2) \)-symmetry, can be constructed from
four-manifolds~\( N \) with an \( \SU(2) \)-invariant weakly coherent triple
\( (\sigma_{1},\sigma_{2},\sigma_{3}) \) and an \( \SU(2) \)-invariant curvature
form~\( F \).
As \( G \)~acts locally freely on open set of~\( M \), we may choose our regular
value~\( s \) so the \( \SU(2) \)-action on an open set of
\( N = \nu_{4}^{-1}(s)/T^{3} \) is also locally free.
So an open set of \( N \) carries a cohomogeneity one-action of~\( \SU(2) \).

Consider \( N = \bR \times \SU(2) \).
Write \( t \) for the parameter on the \( \bR \)-factor and let
\( \theta_{1}, \theta_{2}, \theta_{3} \) be a left-invariant basis of one-forms
on \( \SU(2) \) satisfying
\begin{equation*}
  d\theta_{1} = \theta_{2} \wedge \theta_{3},
\end{equation*}
etc.
We then have a left-invariant coframe on \( N \) given by
\( dt, \theta_{1}, \theta_{2}, \theta_{3} \).
Consequently, a general left-invariant two-form on~\( N \) can be expressed as
\( \sigma_{i} = \sum_{(pqr)=(123)} a_{pi} dt \wedge \theta_{p} + b_{pi}
\theta_{q} \wedge \theta_{r} \) for \( a_{pi} \) and \( b_{pi} \) functions
of~\( t \).
Such a form has differential
\( d\sigma = \sum_{(pqr)=(123)} (- a_{pi} + \dot{b}_{pi}) dt \wedge \theta_{q}
\wedge \theta_{r} \), and so is closed only if \( a_{pi} = \dot{b}_{pi} \), for
\( p = 1,2,3 \).  The resulting form
\begin{equation}
  \label{eq:sigma}
  \sigma_{i} = \sum_{(pqr)=(123)} \dot{b}_{pi} dt \wedge \theta_{p} + b_{pi} \theta_{q} \wedge \theta_{r}.
\end{equation}
is symplectic if and only if
\begin{equation*}
  \partial_{t}(b_{1i}^{2}+b_{2i}^{2}+b_{3i}^{2}) \ne 0
\end{equation*}
everywhere.
We have
\( \sigma_{i} \wedge \sigma_{j} = \sum_{p=1}^{3} \partial_{t}(b_{pi}b_{pj}) \vol
\), where \( \vol = dt \wedge \theta_{1} \wedge \theta_{2} \wedge \theta_{3} \).
It follows that \( (\sigma_{1},\sigma_{2},\sigma_{3}) \) is a weakly coherent
triple if \( B = (b_{pi})_{p,i=1}^{3} \) satisfies that
\begin{equation}
  \label{eq:pd}
  \partial_{t}(B^{T}B) > 0,
\end{equation}
meaning that this derivative is positive definite at each~\( t \).
The other possibility to get a weakly coherent triple is given by
\( \partial_{t}(B^{T}B) < 0 \), but this may be obtained from the positive
definite case by changing the sign of~\( t \).

For a left-invariant curvature form \( F = (F_{1},F_{2},F_{3}) \), we may write
each component as
\begin{equation*}
  F_{i} = \sum_{(pqr)=(123)} \dot{c}_{pi} dt \wedge \theta_{p} + c_{pi} \theta_{q} \wedge \theta_{r},
\end{equation*}
with \( c_{pi} \) functions of~\( t \).
Writing \( C = (c_{pi})_{p,i=1}^{3} \), we then get the data to define a
\( T^{3} \times \SU(2) \)-invariant \( \Spin(7) \)-structure if
\begin{equation}
  \label{eq:cb}
  \partial_{t}(C^{T}B)\ \text{is symmetric}
\end{equation}
together with \eqref{eq:pd}, for each~\( t \).

The two conditions \eqref{eq:pd} and \eqref{eq:cb} are not very restrictive and
give plenty of flexibility.
Note that \eqref{eq:pd} implies that \( B \) is invertible: Given a non-zero
\( x \in \bR^{3} \) constant, put \( b = Bx \).
Then
\( 0 < x^{T} \paren[\big]{\partial_{t}(B^{T}B)} x = \partial_{t}\norm{b}^{2} =
2\inp{\dot{b}}{b} \), so \( b \ne 0 \) and the null space of \( B \)
is~\( \Set{0} \).  We may thus write the general solution to \eqref{eq:cb} as
\begin{equation*}
  C = (B^{T})^{-1}(S(t) + D)
\end{equation*}
with \( S(t) \) symmetric and \( D \) constant.

\begin{example}
  One simple non-diagonal example, so not of hyperKähler type, is given by
  \begin{equation*}
    B =
    \begin{pmatrix}
      e^{t} & 1 & 0 \\
      0 & e^{t} & 1 \\
      0 & 0 & e^{t}
    \end{pmatrix}
  \end{equation*}
  for \( t > - (\log 2)/2 \), since
  \( \partial_{t}(B^{T}B) = 2e^{2t}\Id_{3} + e^{t}X \) with \( X =
  \begin{psmallmatrix}
    0 & 1 & 0 \\
    1 & 0 & 1 \\
    0 & 1 & 0
  \end{psmallmatrix}
  \) and the smallest eigenvalue of \( X \) is \( -\sqrt{2} \).
\end{example}

In general, \( N \)~can have principal orbits of the form \( \SU(2)/H \) with
\( H \) finite, and singular orbits~\( \SU(2)/K \) with \( K/H \) a sphere of
dimension three, one or zero.
If it is zero-dimensional, then the space is locally double covered by
\( \bR \times \SU(2)/H \), so we will not consider this case.
If \( K/H = S^{3} \), then \( K = \SU(2) = S^{3} \) and \( H \) is trivial.
In this case, a neighbourhood of the singular orbit is \( N = \bR^{4} \).
Using the techniques of Eschenburg \& Wang \cite{Eschenburg-W:initial}, one sees
that a two-form~\( \sigma_{i} \) as in~\eqref{eq:sigma} extends over such a
singular orbit at \( t = 0 \) only if each \( b_{pi}(t) = t^{2}f_{pi}(t) \) for
some smooth even functions~\( f_{pi}(t) \).
This two-form~\( \sigma_{i} \) is non-degenerate over the special orbit if and
only if there is a \( p \in \Set{1,2,3} \) so that \( f_{pi}(0) \ne 0 \).
Correspondingly a triple is weakly coherent in a neighbourhood of the special
orbit if and only if \( B = t^{2}F(t) \) with \( F^{T}F \) positive definite at
\( t = 0 \).

\begin{example}
  Consider
  \begin{equation*}
    B =
    \begin{pmatrix}
      t^{2} & t^{4} & 0 \\
      0 & t^{2} & t^{4} \\
      0 & 0 & t^{2}
    \end{pmatrix}
    = t^{2}F
    \quad\text{for}\ F =
    \begin{pmatrix}
      1 & t^{2} & 0 \\
      0 & 1 & t^{2} \\
      0 & 0 & 1
    \end{pmatrix}
    .
  \end{equation*}
  Then \( \left.(F^{T}F)\right|_{t=0} = \Id_{3} \) is positive definite.
  Additionally one may check that \( \partial_{t}(B^{T}B) > 0 \) for
  \( t \in (0,1) \).
  So we obtain a weakly coherent triple for \( t \in \halfopen{0,1} \).

  Taking \( C = (B^{T})^{-1}t^{4}S(t) \) for smooth symmetric \( S(t) \), we get
  curvature forms for a \( T^{3} \)-bundle and can construct corresponding
  \( \Spin(7) \)-geometries with \( T^{3} \times \SU(2) \) symmetry, which have
  stabiliser \( \SU(2) \) over the origin of~\( N \).

  The flexibility of the open condition~\eqref{eq:pd}, means that we can
  adjust~\( B \) for \( t > 0 \), so that the solution is defined on all of
  \( \halfopen{0,\infty} \).
  Note that the final \( \Spin(7) \)-structure is obtained by apply evolution
  equations to the total space of the \( T^{3} \)-bundle over~\( N \).
  The above does not guarantee existence for all time~\( s \) in this additional
  flow.
\end{example}

If \( K/H = S^{1} \), then a tubular neighbourhood of the singular orbit is
finitely covered by \( \SU(2) \times_{K_{0}} \bC \).
Writing \( X_{i} \) for the vector fields on \( \SU(2) \) dual to the
left-invariant one-forms~\( \theta_{i} \), we have that the action of
\( K_{0} \) is generated by \( Z = mX_{1} - n\partial_{\phi} \), where
\( z = te^{i\phi} \) is the coordinate on~\( \bC \).
Replacing \( Z \) by \( -Z \) we may assume that \( m > 0 \).
To get a non-principal orbit we must have \( n \ne 0 \); conjugating the
coordinate~\( z \) if necessary, we may assume that \( n > 0 \).
We then have \( L_{Z}\theta_{1} = 0 \) and \( L_{Z}\theta_{2} = -m\theta_{3} \).
For the two-form~\( \sigma_{i} \) of~\eqref{eq:sigma} to extend smoothly over
the singular orbit, \( b_{1i} \) must be an even function of~\( t \).
If \( b_{2i} \) and \( b_{3i} \) are identically zero, there are no further
restrictions.
If not, and this will be the case for at least one \( \sigma_{i} \) in a weakly
coherent triple, then they both must be of the form \( t^{m/n}f_{pi}(t) \),
\( p=2,3 \), with \( f_{pi}(t) \)~an even function of~\( t \).
The two-form \( \sigma_{i} \) is non-degenerate near \( t=0 \) if and only if
\( \lim_{t\to0} (\partial_{t}(b_{1i}^{2} + b_{2i}^{2} + b_{3i}^{2})/t) \) is
non-zero.
This is equivalent to either (i)~\( \ddot{b}_{1i}(0) \) and \( b_{1i}(0) \) are
both non-zero, or (ii)~one of \( \dot{b}_{2i}(0) \) or \( \dot{b}_{3i}(0) \) is
non-zero and \( m = n \).
Note that \( X_{1} \) has period~\( 4\pi \), so the neighbourhood of the
singular orbit is \( T^{*}\CP(1) \), equivariantly.
For a weakly coherent triple, we can always find a non-zero constant linear
combination of the \( \sigma_{i} \) for which the coefficient of
\( \theta_{2} \wedge \theta_{3} \) is zero at \( t = 0 \), case~(ii) occurs and
the local topology is restricted.
Furthermore, closure of \( \sigma_{i} \) implies
\( b_{1i}^{2} + b_{2i}^{2} + b_{3i}^{2} \) is monotone.
In particular, for weakly coherent triples there can be at most one singular
orbit.
For a weakly coherent triple to be non-degenerate over a singular orbit
\( G/K \) with \( \dim K \) requires \( \lim_{t\to0}(\partial_{t}(B^{T}B)/t) \)
to be positive definite.

\begin{example}
  We may take
  \begin{equation*}
    B =
    \begin{pmatrix}
      1 + \tfrac{1}{2}t^{2} & 0 & 0 \\
      t^{4} & t & 0 \\
      0 & t^{3} & t
    \end{pmatrix}
    .
  \end{equation*}
  This has
  \begin{equation*}
    \partial_{t}(B^{T}B) = t
    \begin{pmatrix}
      2 + t^{2} + 8t^{6} & 5t^{3} & 0 \\
      5t^{3} & 2 + 6t^{4} & 4t^{2} \\
      0 & 4t^{2} & 2
    \end{pmatrix}
    ,
  \end{equation*}
  so \( \partial_{t}(B^{T}B)/t \) converges to~\( 2\Id_{3} \), which is positive
  definite.
  Numerically one may check that \( \partial_{t}(B^{T}B) \) is positive definite
  on \( (0,0.778) \).
  Again one may choose \( C \) so we get curvature forms that extend over the
  singular orbit, and we may also ensure that one of these forms is non-zero in
  \( H^{2}(\CP(1)) \).
  As \( b_{2}(\CP(1)) = 1 \), we can rescale \( C \) by a constant so that the
  periods are integral.
  The resulting \( \Spin(7) \)-manifold will then have one-dimensional
  stabiliser over the singular orbit of~\( N \).
\end{example}

The above cases show that local examples of \( \Spin(7) \)-manifolds with
\( T^{3} \times \SU(2) \)-symmetry exist and exhibit all cases with rank-one
stabilisers.
When the stabiliser of point is of rank two, then this is is a critical point
for the multi-moment map~\( \nu_{4} \), so \( \nu_{4}^{-1}(s)/T^{3} \) is not
necessarily smooth and a more detailed analysis, or another approach, is
required.

The above examples may be specialised to give left-invariant coherent triples
\cite{Madsen-S:multi-moment}, so \( B \) is block diagonal with blocks of size
\( 1 \) and~\( 2 \), to produce \( G_{2} \)-manifolds with
\( T^{2} \times \SU(2) \)-symmetry and singular orbits with rank-one
stabilisers, showing that the construction of Aslan \& Trinca
\cite{Aslan-T:G2-cohom2} admits solutions.

\providecommand{\bysame}{\leavevmode\hbox to3em{\hrulefill}\thinspace}

\end{document}